\numberwithin{equation}{section}
\newtheorem{theorem}{Theorem}[section]
\newtheorem{proposition}[theorem]{Proposition}
\newtheorem{lemma}[theorem]{Lemma}
\newtheorem{corollary}[theorem]{Corollary}
\theoremstyle{definition}
\newtheorem{definition}[theorem]{Definition}
\newtheorem{example}[theorem]{Example}
\newtheorem{remark}[theorem]{Remark}
\newtheorem{assumption}[theorem]{Assumption}
\numberwithin{equation}{section}
\def \dis {\displaystyle}
\def \R {\mathbb{R}}
\def \N {\mathbb{N}}
\def \C {\mathcal{C}}
\def \C {{\mathcal C}}
\def \U {{\mathcal U}}
\def \hvarphi \hat{\varphi}
\def \dx{\mathrm{d}x}
\def \dt{\mathrm{d}t}
\def \dq {\mathrm{d}t \, \mathrm{d}x}
\def\Om{\Omega}
\def \R {\mathbb{R}}
\def \N {\mathbb{N}}
\def \C {\mathcal{C}}
\def \U {\mathcal{U}}
\def \C {{\mathcal C}}
\keywords{Degenerate parabolic systems; optimal control; first- and second-order optimality conditions; weak maximum principle.}
\subjclass[2010]{35B50; 35K65; 35Q93; 49J20; 49K20.}
\begin{document}
	\title[Second-order optimality conditions for the bilinear optimal control]{Second-order optimality conditions for the bilinear optimal control of a degenerate equation}
\author{Cyrille Kenne }
\address[Cyrille Kenne]{Laboratoire LAMIA, Universit\'e des Antilles,
	Campus Fouillole, 97159 Pointe-{\`a}-Pitre  Guadeloupe (FWI)-	
	Laboratoire L3MA, UFR STE et IUT, Universit\'e
	des Antilles, 97275 Schoelcher, Martinique}
\email{kenne853@gmail.com}
\author{Landry Djomegne }
\address[Landry Djomegne]{University of Dschang, BP 67 Dschang, Cameroon, West region}
\email{landry.djomegne@gmail.com}

\author{Pascal Zongo }
\address[Pascal Zongo]{Laboratoire L3MA, UFR STE et IUT, Universit\'e des Antilles, 97275 Schoelcher, Martinique}
\email{pascal.zongo@gmail.com}

	\date{\today}

	\begin{abstract}
		The main purpose of this paper is the study of second-order optimality conditions for the bilinear control of a strongly degenerate parabolic equation. The equation is degenerate at the boundary of the spatial domain. The well-posedness of the state equation, as well as weak maximum principles are established. We prove some differentiability properties of the control-to-state operator and  the existence of optimal solutions. Finally, we derive first- and second-order optimality conditions for the system.
	\end{abstract}

	\maketitle

	\section{Introduction}
The main purpose of this paper is the study of the bilinear control problem for the following degenerate  parabolic equation
\begin{equation}\label{model}
\left\{
\begin{array}{rllll}
\dis y_{t}-\left(a(x)y_{x}\right)_{x} &=&v(t,x)\chi_{\omega}y& \mbox{in}& Q:=(0,T)\times \Omega,\\
\dis  a(x)y_x(\cdot,x)|_{x=\pm 1}&=&0& \mbox{in}& (0,T), \\
\dis  y(0,\cdot)&=&y^0 &\mbox{in}&\Omega,
\end{array}
\right.
\end{equation}
where $\Omega=(-1,1)$, $\omega$ is a nonempty subset of $\Omega$ and $\omega_T =(0,T )\times\omega $. In the problem \eqref{model}, $y=y(t,x)$ denotes the state and $v(t,x)$ is the control function acting on the system via the subset $\omega$.  Here $\chi_{\omega}$  denotes the characteristic function of the control set $\omega$ and $y^0\in L^2(\Omega)$ is the initial data.  We denote by $y_{t}$ and $y_{x}$ the partial derivative of $y$ with respect $t$ and $x$ respectively. The function $a:=a(\cdot)$ represents the diffusion coefficient and is assumed to vanish at the extreme points of $\overline{\Omega}$. The problem \eqref{model}  is said to be strongly degenerate when $a\in \mathcal{C}^1(\overline{\Omega})$ and so, $\dis\frac{1}{a}\notin L^1(\Omega)$.

Bilinear systems are used to describe complex phenomena in fields such as physics, chemistry, biology, economics and climate science \cite{epstein2013,ghil1976,north2017}. These types of systems model the interaction between large ice masses and solar radiation on climate \cite{budyko1969,sellers1969}. The state function in bilinear systems has a highly nonlinear dependence on the control function, which requires careful attention in obtaining results.

Recently, only a few studies have addressed bilinear optimal control problems and derived second-order necessary optimality conditions \cite{aronna2012, aronna2016,aronna2019,aronna2021ss, aronna2021s}. The authors in \cite{aronna2018} considered the optimal control of an infinite-dimensional bilinear system governed by a strongly continuous semigroup operator involving a time-dependent control in $L^1$ space. Using the Goh transform, they derived first and second optimality conditions.

Recently in \cite{aronna2021}, M. Aronna \textit{et al.} investigated a bilinear optimal control problem subject to the Fokker-Planck equation, deriving first and second order necessary and sufficient optimality conditions with a time-dependent control under mild regularity assumptions. However, many phenomena are described by degenerate equations, with degeneracy occurring at the boundary of the space domain, such as in sedimentation-consolidation processes \cite{tory2003} and biological models. In population genetics, degenerate systems are used in gene frequency models formulated as a Markov chain \cite{shimakura1992} or the Fleming-Viot model \cite{epstein2013}.

In this work, we study the bilinear optimal control of a strongly degenerate parabolic equation, where the control $v$ in \eqref{model} depends on time and space and acts on an open subset of the domain. We first establish a weak maximum principle to improve the regularity of solutions and prove the existence of optimal solutions, followed by deriving the first order optimality conditions. Then, using results from \cite{casas2012}, we obtain second order necessary and sufficient conditions for optimality. To the best of our knowledge, such results have not been studied for degenerate systems involving bilinear controls.

	We make the following assumptions:
	
	\begin{assumption}\label{ass1}
	\begin{equation}\label{as1}
 \begin{array}{llll}
			\dis a\in \mathcal{C}^1(\overline{\Omega}),\ \ a>0\ \mbox{in}\ \Omega \ \mbox{and}\ a(-1)=a(1)=0,\\
		 a \;\;\text{is such that}\,\,\,\dis \int_{0}^{x}\frac{ds}{a(s)}\in L^1(\Omega).
		\end{array}
	\end{equation}
\end{assumption}

	\begin{remark}\label{rmq1}$ $
An example of the diffusion coefficient $a$ is given by $a(x)=1-x^2$. In that case, the principal part of the operator in \eqref{model} coincides with that of the Budyko-Sellers model (see \cite{budyko1969, diaz2006, sellers1969}). It is an energy balance model studying the role played by continental and oceanic areas of ice on the evolution of the climate. They are diagnostic models intended to understand the evolution of the global climate on a long time scale. Their main characteristic is the high  sensitivity  to  the  variation  of  solar  and  terrestrial  parameters \cite{diaz1999}.   In dimension one, the unknown $y(t,x)$ represents the averaged temperature over each parallel, where $t$ denotes time and $x$ the sine of latitude. The degeneracy conditions on the boundary is justified by the fact that meridional heat flux at the poles must be zero \cite{diaz1999, north1975}. 
\end{remark}

Recently, some results have been obtained concerning the controllability for bilinear control problems applied on degenerate parabolic systems  (see e.g.  \cite{cannarsa2011a, floridia2014, floridia2020}). The authors in \cite{cannarsa2011} studied the global approximate multiplicative controllability for the linear degenerate parabolic Cauchy-Neumann problem \eqref{model}. The optimal control problems related to problem \eqref{model} have not yet been discussed in general. However, those bilinear optimal control problems remain very important from the perspective of their application in several fields. The main results of this paper are: we prove the existence and the uniqueness of the solution to the degenerate problem \eqref{model}; we establish maximum principle results for the problem \eqref{model} and finally, we derive the first and second conditions for optimality. Continuing the expansion of the findings presented in this paper, future research could explore analogous problems in higher-dimensional spaces, particularly on domains with distinctive geometries. Additionally, one could delve into a broad array of random differential equations or integro-differential models ( see e.g. \cite{arab2022, salim2023, tuncc2023}.)

 The paper is organized as follows. In Section \ref{wellposedness}, we give some notations and definitions of weighted functional spaces and their associated norms for the need of this work; we prove some results on existence and uniqueness of the weak solution of the problem \eqref{model} and establish some weak maximum principle results. In Section \ref{control}, we formulate the optimal control problem and prove the existence of an optimal control solution to \eqref{optimal}-\eqref{defJ}. Properties of the control-to-state mapping is discussed in Section \ref{controltostate}, and the derivation of the first order necessary optimality conditions is derived in Section \ref{firstorder}. Finally, we present the second order necessary and sufficient optimality conditions in Section \ref{sufficientoptcond}.

\section{Well-posedness results\label{wellposedness}}
Here, we discuss the well-posedness of problem \eqref{model} by introducing the weighted Sobolev spaces $H^1_a(\Omega)$ and $H^2_a(\Omega)$ defined as follows (loc. abs. cont. means locally absolutely continuous).
 \begin{equation*}
H^1_{a}(\Omega)=\{u\in L^2(\Omega): u\ \mbox{is loc. abs. cont. in}\ \Omega: \sqrt{a}u_x\in L^2(\Omega)\},
\end{equation*}
endowed with the norm  
$$
\|u\|^2_{H^1_{a}(\Omega)}=\|u\|^2_{L^2(\Omega)}+\|\sqrt{a}u_x\|^2_{L^2(\Omega)},\ \ \ u\in H^1_{a}(\Omega)
$$
and
\begin{equation*}\label{}
\dis H^2_{a}(\Omega)=\{u\in H^1_{a}(\Omega): a(x)u_x\in H^1(\Omega)\},
\end{equation*}
endowed with the norm
\begin{equation*}\label{}
\|u\|^2_{H^2_{a}(\Omega)}=\|u\|^2_{H^1_{a}(\Omega)}+\|(a(x)u_x)_x\|^2_{L^2(\Omega)}.
\end{equation*}

Under the Assumption \ref{ass1}, we have the following result (see \cite{cannarsa2011a}).
\begin{lemma}\label{embed}
We have
	\begin{equation*}
	H^1_a(\Omega)\hookrightarrow L^2(\Omega),
	\end{equation*}
	with compact embedding.
	\begin{remark}
		In contrast to the non-degenerate case, the functions in the Sobolev space $H^1_{a}(\Omega)$ are not necessarily bounded when the operator is strongly degenerate. Therefore, it is necessary to establish certain maximum principle results in order to ensure the boundedness of solutions to \eqref{model} (refer to Corollary \ref{theominmax}).
	\end{remark}

Let $(H^{1}_a(\Omega))'$ be the topological dual space of $H^1_a(\Omega)$ and we denote by $\left\langle \cdot,\cdot\right\rangle$ the dual product between $H^1_a(\Omega)$ and $(H^{1}_a(\Omega))'$.
If we set
\begin{equation}\label{defWTA}
W_a(0,T)= \left\{\rho \in L^2((0,T);H^1_a(\Omega)); \rho_t\in L^2\left((0,T);(H^{1}_a(\Omega))^\prime\right)\right\},
\end{equation}
then $W_a(0,T)$ endowed with the norm 
\begin{equation}\label{}
\|\rho\|^2_{	W_a(0,T)}=\|\rho\|^2_{L^2((0,T);H^1_a(\Omega))}+\|\rho_t\|^2_{L^2\left((0,T);(H^{1}_a(\Omega))^\prime\right)}
\end{equation}
is a Hilbert space. Moreover, we have that the following embedding is continuous 
\begin{equation}\label{contWTA}
W_a(0,T)\subset C([0,T],L^2(\Omega)). 
\end{equation}
 
\end{lemma}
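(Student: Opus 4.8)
The plan is to establish the three claims of Lemma~\ref{embed} in order: first the compact embedding $H^1_a(\Omega)\hookrightarrow L^2(\Omega)$, then the resulting norm inequality, and finally the embedding $W_a(0,T)\subset C([0,T],L^2(\Omega))$, which is essentially an abstract fact once the Gelfand triple structure is in place. Since the excerpt attributes the first two statements to \cite{cannarsa2011a}, I would mainly need to either invoke that reference directly or reproduce the short argument; let me sketch the latter.

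\textbf{Compact embedding.} First I would note that $H^1_a(\Omega)\subset H^1_{loc}(\Omega)$, so on any compactly contained interval $(-1+\delta,1-\delta)$ the restriction map into $L^2(-1+\delta,1-\delta)$ is compact by the classical Rellich--Kondrachov theorem (here $a$ is bounded below away from zero on such an interval, so the weighted and unweighted $H^1$ norms are equivalent there). The remaining issue is uniform control of the mass near the degenerate endpoints $x=\pm1$. For this I would use a Hardy-type inequality: the integrability assumption $\int_0^x \frac{ds}{a(s)}\in L^1(-1,1)$ in \eqref{as1} is exactly what is needed to bound $\int_{|x|>1-\delta}|u|^2\,\mathrm{d}x$ by a quantity that tends to $0$ uniformly as $\delta\to0$ over norm-bounded sets in $H^1_a(\Omega)$. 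Concretely, for $x$ near $1$ one writes $u(x)-u(x_0)=\int_{x_0}^x u_x(s)\,ds$ and estimates $\int_{x_0}^x |u_x|\,ds \le \left(\int_{x_0}^x \frac{ds}{a(s)}\right)^{1/2}\left(\int_{x_0}^x a(s)|u_x(s)|^2\,ds\right)^{1/2}$ by Cauchy--Schwarz, so $|u(x)|$ is controlled near the endpoint and the tail of $\int|u|^2$ is small uniformly. Combining a diagonal/precompactness argument on the interior pieces with this uniform tail smallness gives total boundedness of any norm-bounded sequence in $L^2(\Omega)$, hence the compact embedding. The main obstacle here is making the uniform tail estimate rigorous, i.e.\ choosing the base point $x_0$ and absorbing the $|u(x_0)|$ term using the interior $L^2$ bound; this is the genuine content of the assumption on $a$.

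\textbf{Norm inequality.} This is then immediate: either it follows from the continuity of the (compact) embedding just proved, or more directly from the definition $\|u\|^2_{H^1_a(\Omega)}=\|u\|^2_{L^2(\Omega)}+\|\sqrt a\,u_x\|^2_{L^2(\Omega)}\ge \|u\|^2_{L^2(\Omega)}$, so one may even take $C=1$; in any case the existence of a constant $C$ with $\|y\|_{L^2(\Omega)}\le C\|y\|_{H^1_a(\Omega)}$ is trivial.

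\textbf{The embedding $W_a(0,T)\subset C([0,T],L^2(\Omega))$.} Finally, since $H^1_a(\Omega)\hookrightarrow L^2(\Omega)\cong (L^2(\Omega))' \hookrightarrow (H^1_a(\Omega))'$ forms a Gelfand triple (with $H^1_a(\Omega)$ dense in $L^2(\Omega)$ — density of smooth compactly supported functions can be checked, or follows from \cite{cannarsa2011a}), the standard Lions--Magenes result on the space $\{\rho\in L^2(0,T;V):\rho_t\in L^2(0,T;V')\}$ applies verbatim with $V=H^1_a(\Omega)$, $H=L^2(\Omega)$. This yields that every $\rho\in W_a(0,T)$ has a representative in $C([0,T],L^2(\Omega))$, that the embedding is continuous, and additionally the integration-by-parts formula $\frac{d}{dt}\|\rho(t)\|^2_{L^2(\Omega)}=2\langle\rho_t(t),\rho(t)\rangle$, which will be used later for energy estimates. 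I would state that this is classical and cite Lions--Magenes or Dautray--Lions, noting that the only structural facts required are the Hilbert space structure and the dense continuous embedding $V\hookrightarrow H$, both of which are available.
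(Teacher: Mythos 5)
Your proposal is correct, but note that the paper itself offers no proof of this lemma at all: it simply states the result ``under the assumptions \eqref{as1}'' and cites Cannarsa et al.\ for the compact embedding, and likewise treats the embedding $W_a(0,T)\subset C([0,T],L^2(\Omega))$ as a known consequence of the Lions--Magenes theory for the Gelfand triple $H^1_a(\Omega)\hookrightarrow L^2(\Omega)\hookrightarrow (H^1_a(\Omega))'$. Your reconstruction (interior Rellich--Kondrachov plus a uniform tail estimate near $x=\pm1$ driven by the hypothesis $\int_0^x\frac{ds}{a(s)}\in L^1(-1,1)$, then the trivial bound $C=1$, then the abstract Aubin--Lions--Magenes continuity result) is exactly the standard argument behind the cited reference, so you are supplying what the paper delegates to the literature. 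One phrasing to tighten: after Cauchy--Schwarz you get $|u(x)|^2\le 2|u(x_0)|^2+2\left(\int_{x_0}^x\frac{ds}{a(s)}\right)\|\sqrt{a}\,u_x\|^2_{L^2(\Omega)}$, and in the strongly degenerate case the factor $\int_{x_0}^x\frac{ds}{a(s)}$ may blow up as $x\to1$, so $|u(x)|$ is \emph{not} pointwise controlled near the endpoint (consistently with the remark in the lemma that elements of $H^1_a(\Omega)$ need not be bounded); what the hypothesis buys is precisely that this factor is integrable, so $\int_{1-\delta}^{1}|u|^2\,\dx$ is small uniformly on bounded sets. With that correction, and a standard averaging choice of $x_0$ to absorb $|u(x_0)|$ into the interior $L^2$ norm, your sketch is complete and there is no gap.
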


\subsection{Existence results}\label{existence}

 this subsection, we will establish the existence and uniqueness of a solution to the problem \eqref{model}. Throughout the rest of the paper, the $L^\infty$-norm in $\omega_T$ will be represented by $\|\cdot\|_{\infty}$.
We first consider the following problem 
\begin{equation}\label{model1}
\left\{
\begin{array}{rllll}
\dis p_{t}-\left(a(x)p_{x}\right)_{x} &=&v(t,x)\chi_{\omega}p+f& \mbox{in}& Q,\\
\dis  a(x)p_x(\cdot,x)|_{x=\pm 1}&=&0& \mbox{in}& (0,T), \\
\dis  p(0,\cdot)&=&p^0 &\mbox{in}&\Omega.
\end{array}
\right.
\end{equation}
	Now, we define the weak solution to the problem \eqref{model1} as follows.
	\begin{definition}\label{weaksolution}
		Let $f\in L^2((0,T);(H^1_a(\Omega))')$, $v\in L^\infty(\omega_T)$ and $p^{0}\in L^2(\Omega)$. A function $p\in W_a(0,T)$ is a weak solution to \eqref{model1}, if the following equality 
		\begin{equation}\label{weaksol}
		\begin{array}{lll}
		\dis -\int_{0}^{T} \left\langle \phi_t,p \right\rangle\, \dt + \int_Q a(x)p_x\phi_x\, \dq-\int_{\omega_T}vp\, \phi \;\dq=
		\dis \int_\Omega p^0\,\phi(0,\cdot)\, \dx+\int_{0}^{T}\left\langle f, \phi\right\rangle \,\dt,
		\end{array}
		\end{equation}
		holds, for every
		\begin{equation}\label{hq}
			\phi \in H(Q)=\left\{z\in W_a(0,T):z(T,\cdot)=0 \hbox{ in } \Omega\right\}.
		\end{equation}
	\end{definition}
We state the well posedness of the problem \eqref{model1} in the following theorem.
	\begin{theorem}\label{theoremexistence0}
		Let $f\in L^2((0,T);(H^1_a(\Omega))')$, $v\in L^\infty(\omega_T)$ and $p^{0}\in L^2(\Omega)$.  Then, there exists a unique weak solution $p \in W_a(0,T)$ to \eqref{model1} in the sense of Definition~\ref{weaksolution}.
		In addition, the following estimate holds
		\begin{equation}\label{estimation0}
		\|p\|_{\C([0,T];L^2(\Omega))}+ \|p\|_{L^2((0,T);H^1_a(\Omega))} \leq 2e^{(\|v\|_{\infty}+1)T}\left(\|p^0\|_{L^2(\Omega)}+\|f\|_{L^2((0,T);(H^1_a(\Omega))')}\right).
		\end{equation}
		and 
		\begin{equation}\label{estimation00}
		\dis \|p\|_{W_a(0,T)} \leq 6(\|v\|_{\infty}+4)e^{2(\|v\|_{\infty}+1)T}\left(\|p^0\|_{L^2(\Omega)}+\|f\|_{L^2((0,T);(H^1_a(\Omega))')}\right).
		\end{equation}	
	\end{theorem}
	
Before the proof of Theorem \ref{theoremexistence0}, we state and prove the following intermediate result.\\
We set $z(t,x)=e^{-rt}p(t,x)$ for a suitable $r>0$, which will be chosen later. Then $p$ is solution to  \eqref{model1} if and only if $z$ is solution to

\begin{equation}\label{model2}
\left\{
\begin{array}{rllll}
\dis z_{t}-\left(a(x)z_{x}\right)_{x}+rz &=&v(t,x)\chi_{\omega}z+e^{-rt}f& \mbox{in}& Q,\\
\dis  a(x)z_x(\cdot,x)|_{x=\pm 1}&=&0& \mbox{in}& (0,T), \\
\dis  z(0,\cdot)&=&p^0 &\mbox{in}&\Omega.
\end{array}
\right.
\end{equation}	
	
We have the following definition.
\begin{definition}\label{weaksolution1}
	Let $f\in L^2((0,T);(H^1_a(\Omega))')$, $v\in L^\infty(\omega_T)$ and $p^{0}\in L^2(\Omega)$.  A function
	$z\in W_a(0,T)$ is a weak solution to \eqref{model2}, if for every $\phi\in H(Q)$ defined by \eqref{hq}, the following equality holds
	\begin{equation}\label{weaksol2}
	\begin{array}{lll}
	\dis -\int_{0}^{T} \left\langle \phi_t,p \right\rangle\, \dt + \int_Q a(x)z_x\phi_x\, \dq+r\int_{Q}z\phi\, \dq-\int_{\omega_T}vz\, \phi \;\dq=
	\dis \int_\Omega p^0\,\phi(0,\cdot) \dx+\int_{0}^{T}e^{-rt}\left\langle f, \phi\right\rangle \,\dt.
	\end{array}
	\end{equation}
\end{definition}	
Based on Theorem 1.1 from \cite{lions2013}, we can establish the well-posedness of the problem \eqref{model2}.
	\begin{theorem}\label{theoremexistence01}
	Let $f\in L^2((0,T);(H^1_a(\Omega))')$, $v\in L^\infty(\omega_T)$ and $p^{0}\in L^2(\Omega)$.  Then, the problem \eqref{model2} admits a unique weak solution $z \in W_a(0,T)$ in the sense of Definition~\ref{weaksolution1}.
	In addition, the following estimates hold
	\begin{equation}\label{estimation01}
	\|z\|_{\C([0,T];L^2(\Omega))}+ \|z\|_{L^2((0,T);H^1_a(\Omega))} \leq 2\left(\|p^0\|_{L^2(\Omega)}+\|f\|_{L^2((0,T);(H^1_a(\Omega))')}\right)
	\end{equation}
	and
	\begin{equation}\label{estimation02}
	\dis \|z\|_{W_a(0,T)} \leq 2(\|v\|_{\infty}+3)\left(\|p^0\|_{L^2(\Omega)}+\|f\|_{L^2((0,T);(H^1_a(\Omega))')}\right).
	\end{equation}	
\end{theorem}	
		
	\begin{proof}
We make the proof in three steps.\\
		\noindent \textbf{Step 1.}  For the existence result, we apply \cite[Theorem 1.1, Page 37 ]{lions2013}. We set $r=\|v\|_{\infty}+1$ and we endow $H(Q)$ with the norm defined by 
		$$
		\|\varphi\|^2_{H(Q)}:=\|\varphi\|^2_{L^2((0,T);H^1_a(\Omega))}+\|\varphi(0,\cdot)\|^2_{L^2(\Omega)},\, \forall \varphi \in H(Q).
		$$
		Therefore, it is clear that for any $\rho\in H(Q),$ we have that
		$$\|\rho\|_{L^2((0,T);H^1_a(\Omega))}\leq \|\rho\|_{H(Q)}.$$
		This shows that the embedding $H(Q)\hookrightarrow L^2((0,T);H^1_a(\Omega))$ is continuous.\par	
		Now, let $\varphi \in H(Q)$ and consider the bilinear form $\mathcal{E}(\cdot,\cdot)$ defined on $L^2((0,T);H^1_a(\Omega))\times H(Q)$ by:
		\begin{equation}\label{defCalE}
		\begin{array}{lll}
		\mathcal{E}(z,\phi)&=&\dis -\int_{0}^{T} \left\langle \phi_t,p \right\rangle\, \dt+ \int_Q a(x)z_x\phi_x\, \dq+r\int_{Q}z\phi\, \dq-\int_{\omega_T}vz\, \phi \;\dq.
		\end{array}
		\end{equation}
Using the Cauchy-Schwarz inequality, we obtain
\begin{equation*}
\begin{array}{lll}
|\mathcal{E}(z,\phi)|&\leq &\dis\left[[(2\|v\|_{\infty}+1)\|\phi\|_{L^2(Q)}+\|\phi_t\|_{L^2((0,T);(H^1_a(\Omega))')}]^2+\| \sqrt{a(\cdot)}\phi_x\|^2_{L^2(Q)}\right]^{1/2} \|z\|_{L^2((0,T);H^1_a(\Omega))}.
\end{array}
\end{equation*}

Therefore,  there exists a constant $C=C(\phi,\|v\|_{\infty})>0$  such that
		$$|\mathcal{E}(z,\phi)|\leq C\|z\|_{L^2((0,T);H^1_a(\Omega))}. $$
Consequently, for every fixed $\phi\in H(Q),$
		the functional  $z\mapsto \mathcal{E}(z,\phi)$ is continuous on $L^2((0,T);H^1_a(\Omega)).$\par 	
		Next, for every  $\phi\in H(Q)$ 
		$$\begin{array}{lllll}
		\mathcal{E}(\phi,\phi)&=&-\displaystyle\int_{0}^{T} \left\langle \phi_t,\phi \right\rangle\, \dt+\int_Q a(x)\phi^2_x\, \dq+r\int_{Q}\phi^2\, \dq-\int_{\omega_T}v\phi^2 \;\dq\\
		&\geq&\dis \frac 12  \|\phi(0,\cdot)\|^2_{L^2(\Omega)}+\int_Q a(x)\phi^2_x\, \dq+r\int_{Q}\phi^2\, \dq-\|v\|_{\infty}\int_{Q}\phi^2 \;\dq\\
	&\geq&	\dis \frac 12  \|\phi(0,\cdot)\|^2_{L^2(\Omega)}+\int_Q a(x)\phi^2_x\, \dq+\int_{Q}\phi^2\, \dq,\ \mbox{because}\ r=\|v\|_{\infty}+1\\
		&\geq &\dis \frac 12 \|\phi(0,\cdot)\|^2_{L^2(\Omega)}+ \|\phi\|^2_{L^2((0,T);H^1_a(\Omega))}\\
		&\geq &
		\dis \frac{1}{2}\|\phi\|^2_{H(Q)}.
		\end{array}
		$$	 		
Finally, we consider the linear functional $L(\cdot):H(Q)\to \R$ defined by
		$$
		L(\phi)=:\dis \int_{0}^{T}e^{-rt}\left\langle f, \phi\right\rangle \,\dt+\int_\Omega p^0\,\phi(0,\cdot)\ \dx.
		$$
	Then using the Cauchy-Schwarz inequality, we get 
	\begin{equation*}
	|L(\phi)|\leq \left(\|f\|^2_{L^2((0,T);(H^1_a(\Omega))')}+\|p^0\|^2_{L^2(\Omega)}\right)^{1/2}\|\phi\|_{H(Q)}.
	\end{equation*}
		Therefore, it follows from \cite[Theorem 1.1, page 37 ]{lions2013} that there exists $y\in L^2((0,T);H^1_a(\Omega))$ such that
		\begin{equation}\label{formvar}
		\mathcal{E}(y,\phi)= L(\phi),\quad \forall \phi \in H(Q).
		\end{equation}
		Hence, the problem \eqref{model2} has a solution $y\in L^2((0,T);H^1_a(\Omega))$ in the sense of Definition \ref{weaksolution1}. Moreover, since $z_{t} =\left(a(x)z_{x}\right)_{x}-rz+v(t,x)\chi_{\omega}z+e^{-rt}f\in L^2((0,T);(H^1_a(\Omega))'), $ we deduce that $z\in W_a(0,T)$.\par
		\noindent \textbf{Step 2.} We prove the uniqueness of the solution to \eqref{model2}.\\
		Assume that there exist $z_1$ and $z_2$ solutions to \eqref{model2} with the same right hand side $f,\, v$ and initial datum $p^0$.  Set $\tilde{z}:=z_1-z_2$. Then $\tilde{z}$ satisfies
		\begin{equation}\label{pazero}
		\left\{
		\begin{array}{rllll}
		\dis \tilde{z}_{t}-\left(a(x)\tilde{z}_{x}\right)_{x}+r\tilde{z} &=&v(t,x)\chi_{\omega}\tilde{z}& \mbox{in}& Q,\\
		\dis  a(x)\tilde{z}_x(\cdot,x)|_{x=\pm 1}&=&0& \mbox{in}& (0,T), \\
		\dis  \tilde{z}(0,\cdot)&=&0 &\mbox{in}&\Omega.
		\end{array}
		\right.
		\end{equation}
		If we multiply the first equation of \eqref{pazero} by $\tilde{z}$, then use an integration by parts over $Q$, we obtain
		$$
		 \frac 12\|\tilde{z}(T,\cdot)\|^2_{L^2(\Omega)}+\int_0^T \|\sqrt{a(\cdot)}\tilde{z}(t)\|^2_{L^2(\Omega)}\, dt+r\|\tilde{z}\|^2_{L^2(Q)}\leq \|v\|_{L^\infty(Q)}\|\tilde{z}\|^2_{L^2(Q)}.
		$$
		Using the fact that $r=\|v\|_{\infty}+1$ in this latter inequality,  we deduce that	
		$$\begin{array}{lll}	
		\dis \frac 12\|\tilde{z}(T,\cdot)\|^2_{L^2(\Omega)}+\|\tilde{z}\|^2_{L^2((0,T);H^1_a(\Omega))}&\leq&0.\\
		\end{array}$$
		Hence, $\tilde{z}=0$  in $\Omega$. Thus, $z_1=z_2$ in $\Omega$ and  shown the uniqueness.\\
			\noindent \textbf{Step 3.} We show the estimates \eqref{estimation01}.\\
			 First, we show that
			\begin{equation}\label{estcorpt}
			\dis \|z_t\|_{L^2((0,T);(H^1_a(\Omega))')} \leq 2(\|v\|_{\infty}+1)\|z\|_{L^2((0,T);H^1_a(\Omega))}+\|f\|_{L^2((0,T);(H^1_a(\Omega))')}.
			\end{equation}
			By applying the duality map between the first equation in \eqref{model2} and $\phi\in L^2((0,T);H^1_a(\Omega))$,
			we arrive to
			$$\begin{array}{rlll}
			\dis  \left\langle z_t(t),\phi(t)\right\rangle_{(H^1_a(\Omega))',H^1_a(\Omega)}&=&\dis - \int_\Omega a(x) z_x \phi_x\, \dx-\int_\Omega( r+v(t)\chi_{\omega})z \phi\, \dx +e^{-rt} \left\langle z(t),\phi(t)\right\rangle_{(H^1_a(\Omega))',H^1_a(\Omega)}.
			\end{array}
			$$
			This latter identity along with $r=\|v\|_{\infty}+1$ imply that
			$$
			\begin{array}{llll}
			\dis |\dis  \left\langle z_t(t),\phi(t)\right\rangle_{(H^1_a(\Omega))',H^1_a(\Omega)}| &\leq& \dis \dis \|\sqrt{a}z_x(t)\|_{L^2(\Omega)} \|\sqrt{a} \phi_x(t)\|_{L^2(\Omega)} + \left(2\|v\|_{\infty}+1\right)\|z(t)\|_{L^2(\Omega)}\|\phi(t)\|_{L^2(\Omega)}\\
			&&+\|f(t)\|_{(H^1_a(\Omega))'}\|\phi(t)\|_{H^1_a(\Omega)}
			.
			\end{array}
			$$
			Integrating this inequality over $(0,T)$,  we can deduce that
			\begin{equation} \label{estimationint3-1}
			\begin{array}{llll}
			\dis\int_0^T |\dis  \left\langle z_t(t),\phi(t)\right\rangle_{(H^1_a(\Omega))',H^1_a(\Omega)}|\, \dt\\
			\leq \left[ 2(\|v\|_{\infty}+1)\|z\|_{L^2((0,T);H^1_a(\Omega))}+\|f\|_{L^2((0,T);(H^1_a(\Omega))')}\right]
			\|\phi\|_{L^2((0,T);H^1_a(\Omega))},
			\end{array}
			\end{equation}
			from which we deduce \eqref{estcorpt}.
Next, multiplying the first equation in \eqref{model2}  by $z\in L^2((0,T);H^1_a(\Omega))$, using the integration by parts over $\Omega$, the Cauchy-Schwarz inequality and the Young inequality, we obtain
		$$
		\begin{array}{llll}
		\dis  \frac 12\frac{d}{dt}\|z(t)\|^2_{L^2(\Omega)}+\|\sqrt{a(\cdot)}z(t)\|^2_{L^2(\Omega)}+r\|z(t)\|^2_{L^2(\Om)}\\
		=\dis e^{-rt}\left\langle f(t), z(t)\right\rangle \,\dx+\int_{\omega} vz^2(t)\, \dx\\
		\leq \dis \frac{1}{2}\|f(t)\|^2_{(H^1_a(\Omega))'}+\frac{1}{2}\|z(t)\|^2_{H^1_a(\Omega)}+\dis \|v\|_{\infty}\|z(t)\|^2_{L^2(\Omega)}.
		\end{array}
		$$
		Hence, due to the fact that $r=\|v\|_{\infty}+1$, we obtain
		\begin{equation}\label{inter2}
		\begin{array}{llll}
		\dis \frac{d}{dt}\|z(t)\|^2_{L^2(\Omega)}+\|z(t)\|^2_{H^1_a(\Omega)}&\leq &\|f(t)\|^2_{(H^1_a(\Omega))'}.
		\end{array}
		\end{equation}
		Now,  integrating this latter inequality on $(0,\tau)$, with $\tau\in [0,T]$, we obtain
		$$\begin{array}{llll}
		\dis \|z(\tau)\|^2_{L^2(\Omega)}+\int_{0}^{\tau}\|z(t)\|^2_{H^1_a(\Omega)}\, \dt&\leq &\dis \int_{0}^{T}\|f(t,\cdot)\|^2_{(H^1_a(\Omega))'}\, \dt+\|p^0\|^2_{L^2(\Omega)},
		\end{array}
		$$
		from which we deduce that
		$$\begin{array}{lll}
		\dis\sup_{\tau\in [0,T]}\|z(\tau,\cdot)\|^2_{L^2(\Omega)}&\leq &\left[\|p^0\|^2_{L^2(\Omega)}+\|f\|^2_{L^2((0,T);(H^1_a(\Omega))')}\right],\\
		\dis \int_{0}^{T}\left\| z(t,\cdot)\right\|^2_{H^1_a(\Omega)}\, \dt &\leq&\dis \left[\|p^0\|^2_{L^2(\Omega)}+\|f\|^2_{L^2((0,T);(H^1_a(\Omega))')}\right].
		\end{array}
		$$
Hence, 
\begin{equation}\label{es2}
\begin{array}{lll}
\dis\|z\|_{\C([0,T];L^2(\Omega))}+\left\| z\right\|_{L^2((0,T);H^1_a(\Omega))}\leq 2\left(\|p^0\|_{L^2(\Omega)}+\|f\|_{L^2((0,T);(H^1_a(\Omega))')}\right).
\end{array}
\end{equation}
Combining \eqref{estcorpt} and \eqref{es2}, we deduce that
\begin{equation*}
\dis \|z\|_{W_a(0,T)} \leq 2(\|v\|_{\infty}+3)\left(\|p^0\|_{L^2(\Omega)}+\|f\|_{L^2((0,T);(H^1_a(\Omega))')}\right).
\end{equation*}	
This completes the proof.
	\end{proof}
Now, we prove the Theorem \ref{theoremexistence0}.
\begin{proof}[Proof of Theorem \ref{theoremexistence0}]
The existence of a unique weak solution $p \in W_a(0,T)$ to \eqref{model1} in the sense of Definition~\ref{weaksolution} follows directly from the Theorem \ref{theoremexistence01}. To prove the estimates \eqref{estimation0}-\eqref{estimation00}, we replace $z$ by its value $e^{-(\|v\|_{\infty}+1)t}p$ in  \eqref{estimation01} and \eqref{estimationint3-1} to obtain:
 \begin{equation}\label{44}
 \|p\|_{\C([0,T];L^2(\Omega))}+ \|p\|_{L^2((0,T);H^1_a(\Omega))} \leq 2e^{(\|v\|_{\infty}+1)T}\left(\|p^0\|_{L^2(\Omega)}+\|f\|_{L^2((0,T);(H^1_a(\Omega))')}\right).
 \end{equation}
and  
 $$
 \begin{array}{llll}
 \dis \int_0^T |\left\langle p_t(t),\phi(t)\right\rangle_{(H^1_a(\Omega))^\prime,H^1_a(\Omega)}| \dt\\
 \leq e^{(\|v\|_{\infty}+1)T} \left[ 3(\|v\|_{\infty}+1)\|p\|_{L^2((0,T);H^1_a(\Omega))}+\|f\|_{L^2((0,T);(H^1_a(\Omega))')}\right]
 \|\phi\|_{L^2((0,T);H^1_a(\Omega))},\,\forall \phi\in L^2((0,T);H^1_a(\Omega)),
 \end{array}
 $$
 which in view of  \eqref{44} implies that 
 $$
 \begin{array}{llll}
 \dis \|p_t\|_{L^2((0,T);(H^1_a(\Omega))^\prime)} \leq 6(\|v\|_{\infty}+2)e^{2(\|v\|_{\infty}+1)T}\left(\|p^0\|_{L^2(\Omega)}+\|f\|_{L^2((0,T);(H^1_a(\Omega))')}\right).
 \end{array}
 $$
 Finally, by combining this latter inequality with \eqref{44}, we arrive to \eqref{estimation00}.
\end{proof}

By taking $f=0$ in \eqref{model1}, we obtain the following well posed result for the problem \eqref{model}.

\begin{theorem}\label{theoremexistence}
	Let $v\in L^\infty(\Omega)$ and $y^{0}\in L^2(\Omega)$.  Then, there exists a unique weak solution $y \in W_a(0,T)$ to \eqref{model} in the sense of Definition~\ref{weaksolution} (with $f=0$).
	In addition, the following estimates hold
	\begin{equation}\label{estimation1}
	\|y\|_{\C([0,T];L^2(\Omega))}+ \|y\|_{L^2((0,T);H^1_a(\Omega))} \leq 2e^{(\|v\|_{\infty}+1)T}\|y^0\|_{L^2(\Omega)}
	\end{equation}
	and 
	\begin{equation}\label{estimation10}
	\dis \|y\|_{W_a(0,T)} \leq 6(\|v\|_{\infty}+4)e^{2(\|v\|_{\infty}+1)T}\|p^0\|_{L^2(\Omega)}.
	\end{equation}	
\end{theorem}
\begin{remark}\label{existencef'}
	Notice that if  $f\in L^2(Q)$, then the Theorem \ref{theoremexistence0} remain true and the estimates \eqref{estimation1} holds with $\|f\|_{L^2((0,T);(H^1_a(\Omega))')}$ replaced by $\|f\|_{L^2(Q)}$.
\end{remark}

	\subsection{Maximum principle results}
The objective of this subsection is to establish a weak maximum principle for the problem \eqref{model}, with the purpose of improving the regularity of the solution. We have the the following result.
	\begin{theorem}\label{positive}
		Let  $y^{0}\in L^2(\Omega)$  be  such that $y^0(x)\geq0$ almost everywhere in $\Omega$  and  $v\in L^\infty(\Omega)$. Then the weak solution of  \eqref{model} is positive almost everywhere in $Q$.
	\end{theorem}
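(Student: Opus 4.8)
The plan is to prove positivity via the standard energy argument applied to the negative part $y^- := \max\{-y, 0\}$ of the solution, showing it must vanish. First I would establish that $y^- \in L^2((0,T); H^1_a(\Omega))$ with $(y^-)_t \in L^2((0,T);(H^1_a(\Omega))')$, so that $y^-$ is an admissible test object and the chain rule / integration-by-parts formula valid in $W_a(0,T)$ applies; since $y \in W_a(0,T) \subset C([0,T];L^2(\Omega))$ by \eqref{contWTA}, the truncation $y^-$ inherits the needed regularity (the map $s \mapsto s^-$ is Lipschitz and the weight $a$ is unaffected by truncation, as $\sqrt{a}(y^-)_x = -\sqrt{a}\,y_x \chi_{\{y<0\}}$ almost everywhere).

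Next I would test the weak formulation of \eqref{model} against $-y^-$ (equivalently, multiply the equation by $-y^-$ and integrate over $\Omega$, using $\int_\Omega (a y_x)_x\, y^-\, \dx = \int_\Omega a (y^-)_x^2\, \dx \geq 0$ thanks to the Neumann-type boundary condition $a y_x|_{x=\pm 1} = 0$ which kills the boundary terms). This yields, for almost every $t$,
\begin{equation*}
\frac{1}{2}\frac{d}{dt}\|y^-(t)\|_{L^2(\Omega)}^2 + \|\sqrt{a}\,(y^-)_x(t)\|_{L^2(\Omega)}^2 = \int_\omega v\,(y^-(t))^2\, \dx \leq \|v\|_\infty \|y^-(t)\|_{L^2(\Omega)}^2,
\end{equation*}
where the bilinear term reproduces $(y^-)^2$ because $y \chi_\omega \cdot (-y^-) = (y^-)^2 \chi_\omega$ pointwise. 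Dropping the nonnegative gradient term and applying Gronwall's inequality together with $y^-(0,\cdot) = (y^0)^- = 0$ (since $y^0 \geq 0$ a.e.) forces $\|y^-(t)\|_{L^2(\Omega)} = 0$ for all $t \in [0,T]$, i.e. $y \geq 0$ almost everywhere in $Q$.

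The main obstacle — and the place where care is genuinely needed — is justifying the integration-by-parts/chain-rule identity $\int_0^\tau \langle y_t, -y^-\rangle\, \dt = \frac12\|y^-(\tau)\|_{L^2(\Omega)}^2 - \frac12\|y^-(0)\|_{L^2(\Omega)}^2$ in the degenerate weighted setting, since the classical result (e.g. Lions–Magenes) is stated for the non-degenerate Gelfand triple $H^1 \hookrightarrow L^2 \hookrightarrow (H^1)'$. Here one has the triple $H^1_a(\Omega) \hookrightarrow L^2(\Omega) \hookrightarrow (H^1_a(\Omega))'$, and Lemma \ref{embed} guarantees the first embedding is compact (hence continuous and dense), so the abstract machinery does apply; nonetheless I would either invoke the density of smooth functions in $W_a(0,T)$ and pass to the limit, or approximate $s^-$ by a sequence of $C^1$ convex functions $\beta_\varepsilon$ with $\beta_\varepsilon' $ bounded, derive the identity for $\beta_\varepsilon(y)$, and let $\varepsilon \to 0$. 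A secondary point to verify is that $y^-$ genuinely lies in $L^2((0,T);H^1_a(\Omega))$ rather than merely in $L^2(Q)$ — this follows since $(y^-)_x = -y_x \chi_{\{y<0\}}$ a.e. and therefore $\sqrt a\,(y^-)_x \in L^2(Q)$, using that a.e. differentiation of the Lipschitz truncation commutes with multiplication by the (truncation-independent) weight. Once these regularity technicalities are in place, the energy estimate and Gronwall step are routine, exactly parallel to Step 2 of the proof of Theorem \ref{theoremexistence01}.
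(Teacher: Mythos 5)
Your proposal is correct and follows essentially the same route as the paper: decompose $y=y^{+}-y^{-}$, test against the negative part, drop the nonnegative degenerate gradient term, and conclude $y^{-}\equiv 0$ via Gronwall together with $y^{-}(0,\cdot)=(y^{0})^{-}=0$. The only difference is that you spell out the justification of the chain-rule/integration-by-parts identity for $y^{-}$ in the weighted Gelfand triple $H^1_a(\Omega)\hookrightarrow L^2(\Omega)\hookrightarrow (H^1_a(\Omega))'$, which the paper leaves implicit by citing the sketch from the earlier literature.
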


	\begin{proof}
	Let $(t,x)\in Q$, we write $y(t,x)=y^+(t,x)-y^-(t,x)$, where $y^+(t,x)=\max(y(t,x),0)$ and $y^-(t,x)=\max(0,-y(t,x))$. It is sufficient to show that $y^-(t,x)=0$. Notice that 
		$$\begin{array}{llllll}
		y^+y^-=0 \hbox{ in } Q\;\; \text{and }\ \ y^-(0,x)=\max(0,-y^0(x))=0 \hbox{ in } \Omega,\\
		\end{array}$$
		and  $y^-\in L^2((0,T);H^1_a(\Omega)\cap \mathcal{C}([0,T];L^2(\Omega)).$ \par
	Multiplying the first equation in \eqref{model}  by $y^-(t,x)$ and integrating over $\Omega$, we arrive to
		\begin{equation}\label{es1}
			\begin{array}{llll}
		\dis\langle y_t (t),y^-(t)\rangle-\int_\Omega (a(x)y_x(t))_xy^-(t)\, \dx
		&=&\dis \int_{\omega}vy(t)y^-(t)\,\dx.
		\end{array}
		\end{equation}
		Recalling the definition of $y^+$ and $y^-$, we observe that
		\begin{equation*}
		\begin{array}{llll}
		\dis\langle y_t (t),y^-(t)\rangle=\dis\langle (y^+-y^-)_t (t),y^-(t)\rangle=-\dis\langle (y^-)_t (t),y^-(t)\rangle=-\dis \frac{1}{2}\frac{d}{dt}\|y^-(t) \|^2_{L^2(\Omega)}
		\end{array}
		\end{equation*}
		and 
		\begin{equation*}
		\begin{array}{llll}
		\dis-\int_\Omega (a(x)y_x(t))_xy^-(t)\, \dx= -[a(x)y_x(t)y^-(t)]^1_{-1}+\int_\Omega a(x)y_x(t)(-y_x)(t)\, \dx=-\int_\Omega a(x)y^2_x(t)\, \dx.
		\end{array}
		\end{equation*}
	Hence, \eqref{es1} can be rewritten as
		$$
		-\dis \frac{1}{2}\frac{d}{dt}\|y^-(t) \|^2_{L^2(\Omega)}-\int_\Omega a(x)y^2_x(t)\, \dx
		=-\dis \int_\omega v(t)\left|y^-(t)\right|^2\, \dx.
		$$
		From which we deduce that
		$$
		\dis \frac{d}{dt}\|y^-(t) \|^2_{L^2(\Omega)}\leq 2\|v\|_{\infty} \int_\Omega \left|y^-(t)\right|^2\, \dx.$$
		By using the Gronwall's lemma it follows that
		$$\|y^-(t) \|^2_{L^2(\Omega)}\leq e^{2T\|v\|_{\infty}}\|y^-(0,\cdot)\|_{L^2(\Omega)}=0.$$
		Hence 
		$y^-(t,x)=0$ for almost every $(t,x)\in Q$. Consequently,  $y\geq 0$ almost everywhere  in $Q$.
	\end{proof}
	The following result states a weak maximum principle for the problem \eqref{model1}.
	\begin{theorem}\label{theominmax0}
		Let  $p^0\in L^\infty(\Omega)$, $f\in L^\infty(Q)$ and $v\in L^\infty(\omega_T)$. Then  the weak solution $p\in W_a(0,T)$ of \eqref{model1} belongs to $L^\infty(Q)$ and the following estimate holds 
		\begin{equation}\label{minmax01}
		\begin{array}{lllll}
		 p(t,x) \leq  e^{(\|v\|_{\infty}+1)T} \left(\|p^0\|_{L^\infty(\Omega)}+\|f\|_{L^\infty(\Omega)}\right)\;\hbox{ a.e. in }  [0,T]\times\Omega.
		\end{array}
		\end{equation}
		 Moreover,
		\begin{equation}\label{minmax0}
		\begin{array}{lllll}
		\|p\|_{L^\infty(Q)} \leq  e^{(\|v\|_{\infty}+1)T} \left(\|p^0\|_{L^\infty(\Omega)}+\|f\|_{L^\infty(Q)}\right).
		\end{array}
		\end{equation}
	\end{theorem}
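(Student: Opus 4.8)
The plan is to prove the $L^\infty$ bound by a change of variables that absorbs the exponential growth, followed by a Stampacchia-type truncation argument. First I would set $w(t,x) = e^{-rt} p(t,x)$ with $r = \|v\|_\infty + 1$, so that $w$ solves a problem of the form \eqref{model2} with right-hand side $e^{-rt}f$; since $r \ge \|v\|_\infty$, the zeroth-order terms $rw - v\chi_\omega w$ have a favorable sign, which is exactly what makes the maximum principle work without any exponential blowup in the comparison. Let $M := \|p^0\|_{L^\infty(\Omega)} + \|f\|_{L^\infty(Q)}$. The claim \eqref{minmax01} is then equivalent to showing $w(t,x) \le M$ a.e., and by applying the same argument to $-p$ (whose equation has the same structure, with $f$ replaced by $-f$) one gets $w \ge -M$, hence $\|w\|_{L^\infty(Q)} \le M$, which upon undoing the substitution yields \eqref{minmax0}.

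To show $w \le M$, I would test the weak formulation of the equation for $w$ against $(w - M)^+ \in L^2((0,T);H^1_a(\Omega))$, which is an admissible test function after the usual time-regularization/Steklov-averaging (legitimate because $w \in W_a(0,T) \hookrightarrow C([0,T];L^2(\Omega))$). This produces
\begin{equation*}
\frac{1}{2}\frac{d}{dt}\|(w-M)^+(t)\|_{L^2(\Omega)}^2 + \int_\Omega a(x)\,|((w-M)^+)_x|^2\,\dx + r\int_\Omega w\,(w-M)^+\,\dx = \int_\Omega v\chi_\omega w\,(w-M)^+\,\dx + \int_\Omega e^{-rt} f\,(w-M)^+\,\dx.
\end{equation*}
On the set $\{w > M\}$ one has $w > M \ge 0$, so $w\,(w-M)^+ \ge M\,(w-M)^+$ and $\chi_\omega w\,(w-M)^+ \le \|v\|_\infty^{-1}\cdot\|v\|_\infty\, w (w-M)^+$ can be bounded by $\|v\|_\infty\, w\,(w-M)^+$; combining with $r = \|v\|_\infty + 1$ gives $r w (w-M)^+ - v\chi_\omega w (w-M)^+ \ge (r - \|v\|_\infty) w (w-M)^+ = w(w-M)^+ \ge M (w-M)^+$. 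Since $|e^{-rt} f| \le |f| \le M$ pointwise, the right-hand source term is bounded by $M\int_\Omega (w-M)^+\,\dx$, which cancels against $M\int_\Omega (w-M)^+\,\dx$ coming from the lower bound just derived. Dropping the nonnegative diffusion term, we are left with $\frac{d}{dt}\|(w-M)^+(t)\|_{L^2(\Omega)}^2 \le 0$, and since $(w-M)^+(0,\cdot) = (p^0 - M)^+ = 0$ because $M \ge \|p^0\|_{L^\infty(\Omega)}$, Gronwall (trivially) gives $(w-M)^+ \equiv 0$, i.e. $w \le M$ a.e.

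The main obstacle I anticipate is the rigorous justification that $(w-M)^+$ is an admissible test function and that the identity $\langle w_t, (w-M)^+\rangle = \frac12 \frac{d}{dt}\|(w-M)^+\|_{L^2(\Omega)}^2$ holds in the degenerate setting; this requires the chain rule in $W_a(0,T)$ and the embedding \eqref{contWTA}, together with a density/approximation argument since test functions in $H(Q)$ are required to vanish at $t=T$ whereas $(w-M)^+$ need not. The standard fix is to work with the equation in its strong-in-time form $\langle w_t(t), \cdot\rangle + \text{(elliptic form)} = \cdots$ valid for a.e. $t$ (which holds because $w_t \in L^2((0,T);(H^1_a(\Omega))')$), and to use the Mignot–Bamberger lemma (or the result underlying \eqref{contWTA}) on differentiation of $t \mapsto \|(w(t)-M)^+\|_{L^2}^2$. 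One should also check that the truncation interacts correctly with the degenerate weight, i.e. that $((w-M)^+)_x = \chi_{\{w>M\}} w_x$ a.e. so that $\sqrt{a}\,((w-M)^+)_x \in L^2(\Omega)$; this is immediate from the fact that $w \in H^1_a(\Omega)$ and the weight $a$ is bounded. Everything else is a routine energy estimate. Finally, the pointwise statement ``a.e. in $\Omega \times [0,T]$'' follows since the bound holds for a.e. $t$ and, for each such $t$, a.e. $x$, then Fubini.
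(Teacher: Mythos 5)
Your proof is correct and is essentially the paper's own argument: the paper also sets $z=e^{-(\|v\|_\infty+1)t}p$, introduces $w=K-z$ with $K=\|p^0\|_{L^\infty(\Omega)}+\|f\|_{L^\infty(Q)}$, and tests with $w^-$, which is exactly your truncation $(z-K)^+$, using the same sign observations ($r-\|v\|_\infty\ge 1$ and $K-e^{-rt}f\ge 0$) before concluding by Gronwall. Your version is marginally cleaner in that the cancellation makes Gronwall unnecessary and you state explicitly the symmetric argument for $-p$ needed to pass from \eqref{minmax01} to \eqref{minmax0}, which the paper leaves implicit.
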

	
	\begin{proof} For any $(t,x)\in Q$, we set  $z(t,x)=e^{-(\|v\|_{\infty}+1)t}p(t,x)$, where $p$ is the weak solution to \eqref{model1}. Then according to Theorem \ref{theoremexistence01}, $z\in W_a(0,T)$ is the unique weak solution of \eqref{model2}, that we recall here
		\begin{equation*}
		\left\{
		\begin{array}{rllll}
		\dis z_{t}-\left(a(x)z_{x}\right)_{x}+(\|v\|_{\infty}+1)z &=&v(t,x)\chi_{\omega}z+e^{-(\|v\|_{\infty}+1)t}f& \mbox{in}& Q,\\
		\dis  a(x)z_x(\cdot,x)|_{x=\pm 1}&=&0& \mbox{in}& (0,T), \\
		\dis  z(0,\cdot)&=&p^0 &\mbox{in}&\Omega.
		\end{array}
		\right.
		\end{equation*}		
We claim that 
\begin{equation}\label{ineqint}
 z(t,x)\leq \|p^0\|_{L^\infty(\Omega)}+\|f\|_{L^\infty(Q)}\;\hbox{ a.e. in }  [0,T]\times\Omega.
\end{equation}		
We set $w(t,x)=\|p^0\|_{L^\infty(\Omega)}+\|f\|_{L^\infty(Q)}-z(t,x)$ for any $(t,x)\in (0,T)\times\Omega$. Then,
		$$\begin{array}{lll}
		w(0,x)&=&\|f\|_{L^\infty(Q)}+(\|p^0\|_{L^\infty(\Omega)}-p^0(x)),\\
		&\geq& 0,
		\end{array}
		$$
for any $x\in \Omega.$ Moreover, $w$ satisfies
		\begin{equation}\label{modelwb}
		\left\{
		\begin{array}{rllll}
		\dis  w_t-(a(x)w_x)_x+ (\|v\|_{\infty}+1)w&=& vw\chi_{\omega }+ ( \|v\|_{\infty}-v\chi_{\omega })K+\left(K-e^{-(\|v\|_{\infty}+1)t}f\right)\qquad &\mbox{in}& Q,\\
		\dis  a(x)w_x(\cdot,x)|_{x=\pm 1}&=&0  &\mbox{in}& (0,T) ,\\
		w(0,\cdot)&=& \|p^0\|_{L^\infty(\Omega)}-p^0&\mbox{in}& \Omega,
		\end{array}
		\right.
		\end{equation}
		with $K=\|p^0\|_{L^\infty(\Omega)}+\|f\|_{L^\infty(Q)}$.
We write $w(t,x)=w^+(t,x)-w^-(t,x)$, where $w^+(t,x)=\max(w(t,x),0)$ and $w^-(t,x)=\max(0,-w(t,x))$. It is sufficient to show that $w^-(t,x)=0$ for almost every $(t,x)\in Q$. That is $w^-\in L^2((0,T);H^1_a(\Omega)\cap \mathcal{C}([0,T];L^2(\Omega)).$  Notice that $w^+w^-=0$ and since $w(0,x)\geq 0$ in $\Omega$,
$w^-(0,x)=\max(0,-w(0,\cdot))=0 \hbox{ in } \Omega.$  \par

If we multiply the first equation in \eqref{modelwb}  by $w^-(t)$ and integrate over $\Omega$, we obtain
\begin{equation}\label{aj1}\begin{array}{llll}
		\dis\langle w_t (t),w^-(t)\rangle-\int_\Omega (a(x)w_x(t))_xw^-(t)\, \dx +(\|v\|_{\infty}+1)\int_\Omega w(t)w^-(t)\,\dx \\
		=\dis \int_{\omega}v(t)w(t)w^-(t)\,\dx+K\int_{\omega}( \|v\|_{\infty}-v(t))w^-(t)\,\dx+\int_{\Omega}\left(K-e^{-(\|v\|_{\infty}+1)t}f(t)\right)w^-(t)\,\dx.
		\end{array}
		\end{equation}
	Observe that		
		\begin{equation*}
		\begin{array}{llll}
		\dis\langle w_t (t),w^-(t)\rangle=- \langle w^-_t (t),w^-(t)\rangle=\dis -\frac{1}{2}\frac{d}{dt}\|w^-(t)\|^2_{L^2(\Omega)},
		\end{array}
         \end{equation*}
        
        \begin{equation*}
        \begin{array}{llll}
        \dis-\int_\Omega (a(x)w_x(t))_xw^-(t)\, \dx= \int_\Omega a(x)w_x(t)(-w_x(t)) \,\dx=-\int_\Omega a(x)w^2_x(t)\, \dx,
        \end{array}
        \end{equation*}
        
        \begin{equation*}
        \begin{array}{llll}
        \dis(\|v\|_{\infty}+1)\int_\Omega w(t)w^-(t)\,\dx=- (\|v\|_{\infty}+1)\int_\Omega (w^-(t))^2\,\dx
        \end{array}
        \end{equation*}		
and 		
		
		\begin{equation*}
		\begin{array}{llll}
		\dis\int_\Omega v(t) w(t)w^-(t)\,\dx=- \int_\Omega v(t)(w^-(t))^2\,\dx.
		\end{array}
		\end{equation*}	
	Therefore, \eqref{aj1} can be rewritten as
	\begin{equation}\label{aj2}\begin{array}{llll}
	\dis\frac{1}{2}\frac{d}{dt}\|w^-(t)\|^2_{L^2(\Omega)}+ \int_\Omega a(x)w^2_x(t)\, \dx+(\|v\|_{\infty}+1)\int_\Omega (w^-(t))^2\,\dx\\
	=\dis \int_\Omega v(t)(w^-(t))^2\,\dx -K\int_{\omega}( \|v\|_{\infty}-v(t))w^-(t)\,\dx-\int_{\Omega}\left(K-e^{-(\|v\|_{\infty}+1)t}f(t)\right)w^-(t)\,\dx.
	\end{array}
	\end{equation}
On the one hand, since $a(\cdot)>0$ in $\Omega$, then the left hand side of \eqref{aj2} can be lower bounded by its first term.  On the other hand, 	$\|v\|_{\infty}-v\geq 0$ and $\dis K-e^{-(\|v\|_{\infty}+1)t}f=\|p^0\|_{L^\infty(\Omega)}+\left(\|f\|_{L^\infty(Q)}-e^{-(\|v\|_{\infty}+1)t}f\right)\geq  0$ almost every where in $Q$ and so the right hand side of \eqref{aj2} can be upper bounded by the term $\dis \|v\|_{\infty} \int_\Omega \left(w^-(t)\right)^2 dx$. Hence from \eqref{aj2}, we deduce that

$$\dis \frac{d}{dt}\|w^-(t) \|^2_{L^2(\Omega)}\leq 2\|v\|_{\infty} \int_\Omega \left(w^-(t)\right)^2 \,\dx.$$
		By using the Gronwall's lemma it follows that
		$$\|w^-(t) \|^2_{L^2(\Omega)}\leq e^{2t\|v\|_{\infty}}\|w^-(0,\cdot)\|_{L^2(\Omega)}=0.$$
		Hence 
		$w^-(t,x)=0$ for almost every $(t,x)\in Q$, that is $w(t,x)\geq0$ for almost every $(t,x)\in Q$. Consequently, 
		 \begin{equation*}
	z(t,x)\leq \|p^0\|_{L^\infty(\Omega)}+\|f\|_{L^\infty(Q)}\;\hbox{ a.e. in } [0,T]\times\Omega.
		\end{equation*}	
		
Now by taking $z(t,x)=e^{-(\|v\|_{\infty}+1)t}p(t,x)$ in this latter inequality, we deduce \eqref{minmax01} and then \eqref{minmax0}. 
	\end{proof}


By taking $f=0$ in the Theorem \ref{theominmax0}, we obtain the following weak maximum principle for the problem \eqref{model}.

\begin{corollary}\label{theominmax}
	Let  $y^0\in L^\infty(\Omega)$ and $v\in L^\infty(\omega_T)$. Then  the weak solution $y\in W_a(0,T)$ of \eqref{model} belongs to $L^\infty(Q)$. More precisely, the following estimate holds 
	\begin{equation}\label{minmax}
	\begin{array}{lllll}
	y(t,x) \leq  e^{(\|v\|_{\infty}+1)T}\|y^0\|_{L^\infty(\Omega)}\;\hbox{ a.e. in } [0,T]\times\Omega.
	\end{array}
	\end{equation}
 Moreover,
	\begin{equation}\label{minmaxy}
	\begin{array}{lllll}
	\|y\|_{L^\infty(Q)} \leq  e^{(\|v\|_{\infty}+1)T} \|y^0\|_{L^\infty(\Omega)}.
	\end{array}
	\end{equation}
\end{corollary}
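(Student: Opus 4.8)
The plan is to derive Corollary~\ref{theominmax} as the special case $f=0$ of Theorem~\ref{theominmax0}, which has just been proved, and then to upgrade the one-sided bound \eqref{minmax} to the two-sided $L^\infty$-bound \eqref{minmaxy} by applying the same one-sided estimate to $-y$. First I would observe that, by Theorem~\ref{theoremexistence}, for $y^0\in L^\infty(\Omega)\subset L^2(\Omega)$ and $v\in L^\infty(\omega_T)$ the system \eqref{model} has a unique weak solution $y\in W_a(0,T)$, and that $y$ is exactly the weak solution of \eqref{model1} with $f=0$. Since $f=0\in L^\infty(Q)$ and $y^0\in L^\infty(\Omega)$, Theorem~\ref{theominmax0} applies verbatim and gives $y\in L^\infty(Q)$ together with
\[
y(t,x)\le e^{(\|v\|_{\infty}+1)T}\bigl(\|y^0\|_{L^\infty(\Omega)}+\|f\|_{L^\infty(Q)}\bigr)=e^{(\|v\|_{\infty}+1)T}\|y^0\|_{L^\infty(\Omega)}\quad\text{a.e. in }\Omega\times[0,T],
\]
which is precisely \eqref{minmax}.

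For the lower bound I would set $\tilde y:=-y$. By linearity of \eqref{model} in $y$, the function $\tilde y$ is the weak solution of the same problem with initial datum $\tilde y^0:=-y^0\in L^\infty(\Omega)$ and source $\tilde f:=0$; in particular $\|\tilde y^0\|_{L^\infty(\Omega)}=\|y^0\|_{L^\infty(\Omega)}$. Applying the already-established one-sided estimate \eqref{minmax} to $\tilde y$ yields
\[
-y(t,x)=\tilde y(t,x)\le e^{(\|v\|_{\infty}+1)T}\|y^0\|_{L^\infty(\Omega)}\quad\text{a.e. in }\Omega\times[0,T].
\]
Combining the two one-sided inequalities gives $|y(t,x)|\le e^{(\|v\|_{\infty}+1)T}\|y^0\|_{L^\infty(\Omega)}$ for almost every $(t,x)$, hence $\|y\|_{L^\infty(Q)}\le e^{(\|v\|_{\infty}+1)T}\|y^0\|_{L^\infty(\Omega)}$, which is \eqref{minmaxy}.

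There is essentially no serious obstacle here, since all the analytic work (the change of variables $z=e^{-(\|v\|_\infty+1)t}p$, the truncation $w=w^+-w^-$, the integration by parts in the weighted space, and the Gronwall argument) has already been carried out in the proof of Theorem~\ref{theominmax0}. The only points that require a word of care are: verifying that the hypotheses of Theorem~\ref{theominmax0} are met with $f\equiv 0$ (trivial, since $0\in L^\infty(Q)$); and checking that $-y$ is genuinely a weak solution of \eqref{model} with datum $-y^0$ in the sense of Definition~\ref{weaksolution}, which is immediate from the linearity of the weak formulation \eqref{weaksol} in the pair $(p,p^0)$ when $f=0$. I would also remark, to fix the slightly garbled cross-reference in the paragraph preceding the statement, that the corollary is deduced \emph{from} Theorem~\ref{theominmax0} and not the reverse.
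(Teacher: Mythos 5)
Your proposal is correct and follows exactly the paper's route: the corollary is obtained by specializing Theorem~\ref{theominmax0} to $f=0$ (the paper's cross-reference to ``Corollary \ref{theominmax}'' is indeed a typo for the theorem). Your additional step of applying the one-sided bound to $-y$ to deduce \eqref{minmaxy} merely makes explicit a passage the paper leaves implicit when it deduces \eqref{minmax0} from \eqref{minmax01}, and it is valid by linearity of the homogeneous equation.
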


\section{Resolution of the optimization problem \label{control}}
We state the following definition.
	\begin{definition}\label{ctso}
		We define the control-to-state mapping
		$$
		G:L^\infty(\omega_T)\to W_a(0,T),
		$$
		which associates to each $u\in L^\infty(\omega_T)$ the unique weak solution $y\in W_a(0,T)$ of \eqref{model}. Sometimes, we may write $G(u)$ to denote the state $y$ corresponding to the control $u$.
	\end{definition}
	We consider the following optimal control problem:
	\begin{equation}\label{optimal}
	\inf_{v\in \U} J(G(v),v),
	\end{equation}
	where 
	\begin{equation}\label{defuad}
	\dis \U=\left\{w\in L^\infty(\omega_T): m\leq w\leq M\right\}
	\end{equation}
	and
	\begin{equation}\label{defJ}
	J(G(v),v)=\frac{1}{2}\|G(v)(T)-y^d\|^2_{L^2(\Omega)}+ \frac{\alpha}{2}\|v\|^2_{L^2(\omega_T)},
	\end{equation}
	with $ \alpha>0$, $m,M\in \R$ with $m<M$, $y^d\in L^\infty({\Omega})$ and $G(v) \in W_a(0,T)\cap L^\infty(Q)$ is solution to \eqref{model}.\par
	\begin{definition}\label{defopt}
		\item We say that $u\in \U$ is a $L^\infty$-local solution (resp., $L^2$-local solution) of \eqref{optimal}-\eqref{defJ} if there exists $\varepsilon>0$ such that $J(G(u),u)\leq J(G(v),v)$ holds for every $v\in \U\cap B^\infty_\varepsilon(u)$ (resp. $v\in \U\cap B^2_\varepsilon(u)$), where $ B^\infty_\varepsilon(u)$ (resp., $B^2_\varepsilon(u)$) denote the open ball in  $L^\infty(Q)$ (resp., $L^2(Q)$) of radius $\varepsilon$ centered at $u$.
	\end{definition}
	\subsection{Existence of optimal controls}
	In this Section we prove the existence of optimal controls.	
	\begin{theorem}\label{existcontrol1}
		Let  $\alpha>0$, $v\in \mathcal{U}$ and $y^{0},y^d\in L^\infty(\Omega)$. Then, there exists at least one solution $u\in \mathcal{U}$ to the optimal control problem \eqref{optimal}-\eqref{defJ}.
	\end{theorem}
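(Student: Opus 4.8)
The plan is to use the direct method of the calculus of variations. First I would note that the cost functional $J$ is bounded below by $0$, so the infimum $\beta := \inf_{v\in\U} J(G(v),v)$ is a finite nonnegative real number, and there exists a minimizing sequence $(v_n)_{n\in\N}\subset \U$ with $J(G(v_n),v_n)\to\beta$. Writing $y_n := G(v_n)$ for the corresponding states, the first task is to extract compactness. Since $m\le v_n\le M$ pointwise a.e.\ on $\omega_T$, the sequence $(v_n)$ is bounded in $L^\infty(\omega_T)$, hence in $L^2(\omega_T)$, and in particular $\|v_n\|_\infty \le \max(|m|,|M|) =: R$ uniformly in $n$. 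By the a priori estimate \eqref{estimation1} of Theorem~\ref{theoremexistence}, the states satisfy $\|y_n\|_{\C([0,T];L^2(\Omega))} + \|y_n\|_{L^2((0,T);H^1_a(\Omega))} \le 2e^{(R+1)T}\|y^0\|_{L^2(\Omega)}$, so $(y_n)$ is bounded in $L^2((0,T);H^1_a(\Omega))$. Moreover, from the equation itself, $\partial_t y_n = (a(x)(y_n)_x)_x + v_n\chi_\omega y_n$ is bounded in $L^2((0,T);(H^1_a(\Omega))')$ (the first term by the definition of $H^2_a$/duality, the second because $v_n$ is bounded in $L^\infty$ and $y_n$ in $L^2(Q)$), so $(y_n)$ is bounded in $W_a(0,T)$.

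Next I would pass to subsequences (not relabeled): $v_n \rightharpoonup u$ weakly in $L^2(\omega_T)$, $y_n \rightharpoonup y$ weakly in $L^2((0,T);H^1_a(\Omega))$, $\partial_t y_n \rightharpoonup y_t$ weakly in $L^2((0,T);(H^1_a(\Omega))')$, hence $y_n\rightharpoonup y$ weakly in $W_a(0,T)$. The convex set $\U$ is closed in $L^2(\omega_T)$ and convex, hence weakly closed, so $u\in\U$; in particular $\|u\|_\infty\le R$. By the Aubin--Lions lemma, using the compact embedding $H^1_a(\Omega)\hookrightarrow L^2(\Omega)$ from Lemma~\ref{embed} together with the continuous embedding $L^2(\Omega)\hookrightarrow (H^1_a(\Omega))'$, the embedding $W_a(0,T)\hookrightarrow L^2(Q)$ is compact, so (along a further subsequence) $y_n\to y$ strongly in $L^2(Q)$ and a.e.\ in $Q$. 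The main point requiring care is identifying the limit of the bilinear term $\int_{\omega_T} v_n y_n\phi\,\dq$ for test functions $\phi\in H(Q)$: since $y_n\to y$ strongly in $L^2(\omega_T)$ and $\phi$ is fixed, $y_n\phi\to y\phi$ strongly in $L^2(\omega_T)$ (using that $\phi\in L^2((0,T);H^1_a(\Omega))$ and... actually more simply that $y_n\phi \to y\phi$ in $L^1$, but one needs $L^2$ here — so I would instead argue $y_n\phi\to y\phi$ in $L^2(\omega_T)$ fails in general because $\phi$ need not be bounded). The clean route is: $v_ny_n\phi = v_n(y_n-y)\phi + v_n y\phi$; the first term tends to $0$ since $\|v_n\|_\infty\le R$ and $(y_n-y)\to 0$ in $L^2(Q)$ while... again $\phi$ unbounded is an issue. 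I would resolve this by improving regularity: by Corollary~\ref{theominmax}, since $y^0\in L^\infty(\Omega)$, we have $y_n\in L^\infty(Q)$ with $\|y_n\|_{L^\infty(Q)}\le e^{(R+1)T}\|y^0\|_{L^\infty(\Omega)} =: R'$ uniformly in $n$; hence $y_n\to y$ also in every $L^p(Q)$, $p<\infty$, and $y\in L^\infty(Q)$ with the same bound. Then $v_n y_n\chi_\omega \rightharpoonup u y\chi_\omega$ weakly in $L^2(Q)$ (product of a bounded-in-$L^\infty$, hence weakly-$*$ convergent, sequence with a strongly $L^2$-convergent one), which suffices to pass to the limit in \eqref{weaksol}.

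Passing to the limit in the weak formulation \eqref{weaksol} for $y_n$ with $v_n$, term by term — the parabolic terms $-\int_Q\phi_t y_n$ and $\int_Q a(x)(y_n)_x\phi_x$ by weak convergence in $L^2((0,T);H^1_a(\Omega))$, the bilinear term as just discussed, the initial-data term unchanged — shows that $y = G(u)$ is the weak solution of \eqref{model} associated to $u$. Finally I would invoke lower semicontinuity: the map $v\mapsto \|v\|^2_{L^2(\omega_T)}$ is convex and strongly continuous on $L^2(\omega_T)$, hence weakly lower semicontinuous, so $\liminf_n \|v_n\|^2_{L^2(\omega_T)}\ge \|u\|^2_{L^2(\omega_T)}$; and since $y_n(T)\to y(T)$ — this uses the continuity $W_a(0,T)\hookrightarrow \C([0,T];L^2(\Omega))$ of \eqref{contWTA} together with, e.g., the fact that weak convergence in $W_a(0,T)$ plus the Aubin--Lions argument gives $y_n(T)\rightharpoonup y(T)$ weakly in $L^2(\Omega)$, whence $\|y(T)-y^d\|_{L^2(\Omega)}\le\liminf_n\|y_n(T)-y^d\|_{L^2(\Omega)}$ — the fidelity term is also weakly l.s.c. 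Combining,
$$
J(G(u),u) \le \liminf_{n\to\infty} J(G(v_n),v_n) = \beta = \inf_{v\in\U} J(G(v),v),
$$
and since $u\in\U$ the reverse inequality is trivial, so $u$ is a minimizer. The step I expect to be the main obstacle is the rigorous passage to the limit in the bilinear term $\int_{\omega_T} v_n y_n\phi\,\dq$; the weak convergence of $v_n$ alone is too weak to multiply against the merely weakly convergent $y_n$, so one genuinely needs the strong $L^2(Q)$ compactness of $(y_n)$ from Aubin--Lions (and, for a clean argument, the uniform $L^\infty(Q)$ bound from Corollary~\ref{theominmax}).
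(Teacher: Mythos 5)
Your proposal is correct and follows essentially the same route as the paper: a minimizing sequence, the a priori bounds \eqref{estimation1} and the uniform $L^\infty$ control bound $\beta=\max\{|m|,|M|\}$, extraction of the weak limits \eqref{c01}--\eqref{convB1}, identification of $y=G(u)$ by passing to the limit in the weak formulation, and weak lower semicontinuity of $J$. The paper compresses the convergence of the bilinear term and the identification of the limit state into a citation of ``classical arguments,'' whereas you supply the Aubin--Lions compactness and the uniform $L^\infty(Q)$ bound from Corollary~\ref{theominmax} explicitly; this is exactly the content being outsourced there, so the two proofs coincide in substance.
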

	\begin{proof}
		We observe that $J(G(v),v)\geq 0$ for all $v\in \mathcal{U}$. Now, let $\{(G(v^n),v_n)\}_n\subset W_a(0,T)\times\mathcal{U}$ be a minimizing sequence such that
		$$\lim_{n\to +\infty}J(G(v^n),v^n)= \inf_{v\in\U}J(G(v),v). $$
		Then, there is a positive constant $C$ independent of $n$ which may varies form one line to another such that
		\begin{equation}\label{bound01}
		\|y^n(T)\|_{L^2(\Omega)}\leq C	
		\end{equation}
		and 
		\begin{equation}\label{bound11}
		\|v^n\|_{L^2(\omega_T)}\leq C.
		\end{equation}
		Since $\|v_n\|_{\infty}\leq \max\{|m|,|M|\}$ and  $y^n$ being solution of \eqref{model} with $v=v^n$, we can deduce from \eqref{estimation10} that there is a positive constant $C$ independent of $n$ such that
		\begin{equation}\label{b2}	
		\|y^n\|_{W_a(0,T)} \leq C.
		\end{equation}
		Moreover, from \eqref{estimation0} we can write 
		\begin{equation*}
		\|y^n\|_{L^2((0,T);H^1_a(\Omega))} \leq 2e^{(\|v^n\|_{\infty}+1)T}\|y^0\|_{L^2(\Omega)}
		\leq 2e^{(\beta+1)T}\|y^0\|_{L^2(\Omega)},
		\end{equation*}
		with	$\beta=\max\{|m|,|M|\}$.
		Consequently, 
		\begin{eqnarray}\label{es3}
		\|v^ny^n\|_{L^2(\omega_T )}\leq C\|y^n\|_{L^2(Q)}\leq C\|y^n\|_{L^2((0,T);H^1_a(\Omega))}
		\leq 2Ce^{(\beta+1)T}\|y^0\|_{L^2(\Omega)}.
		\end{eqnarray}
		From the boundedness of $\U$ in $L^\infty(\omega_T)$ and \eqref{bound01}-\eqref{es3}, we deduce the existence of  $u\in L^\infty(\omega_T)$, $\eta\in L^2(\Omega)$, $y\in W_a(0,T) $ and $\zeta\in L^2(\omega_T)$ such that up to a subsequence and as $n\to +\infty$, 
		\begin{equation}\label{c1}
		v^n\rightharpoonup u   \text{ weakly-$\star$ in } L^\infty(\omega_T),
		\end{equation}	
		\begin{equation}\label{c1a}
		v^n\rightharpoonup u   \text{ weakly in } L^2(\omega_T),
		\end{equation}
		\begin{equation}\label{c1b}
		y^n(T)\rightharpoonup \eta   \text{ weakly in } L^2(\Omega),
		\end{equation}
		\begin{equation}\label{c2}
		y^n\rightharpoonup y   \text{ weakly in }  W_a(0,T),
		\end{equation}
		and 
		\begin{equation}\label{c41}
				v^ny^n\rightharpoonup \zeta    \text{ weakly in }   L^2(\omega_T).
				\end{equation}
		Using the compact embedding $W_a(0,T)\hookrightarrow L^2(Q)$, we deduce from \eqref{c2} that
		\begin{equation}\label{c3}
		y^n\to y   \text{ strongly in }  L^2(Q).
		\end{equation}
Moreover, 	
	Taking \eqref{c1a} and \eqref{c3} into account and using the weak-strong convergence, we get that,  as $n\to+\infty$
	\begin{equation}\label{c61}
	v^ny^n\rightharpoonup uy    \text{ weakly in } L^1(\omega_T),
	\end{equation}
	which in view of the continuous embedding $L^2(\omega_T)\hookrightarrow L^1(\omega_T)$,  \eqref{c41} and the uniqueness of the weak limit,  imply that  $\zeta=uy$. We have shown that,  as $n\to +\infty$
		\begin{equation}\label{es4}
	v^ny^n\rightharpoonup uy   \text{ weakly in }  L^2(\omega_T).
	\end{equation}	
		
%
%
%
%
%
%
%
		Since $\U$ is a closed convex subset of $L^2(\omega_T)$,  then $\U$ is weakly closed and so
		\begin{equation}\label{ajout5}
		u\in \U.
		\end{equation}
Now, we show that $y=y(u)$ and	
\begin{equation}\label{convroT}
y^n(T)\rightharpoonup y(T)  \text{ weakly in }   L^2(\Omega).
\end{equation}	
We use similar arguments as in \cite{dorville2021,kenne2020,cb2021,cprg2021,landry2020}.
We first recall that $y^n$ is satisfies
	\begin{equation}\label{pn1}
	\left\{
	\begin{array}{rllll}
	\dis (y^n)_{t}-\left(a(x)(y^n)_{x}\right)_{x} &=&v^n(t,x)\chi_{\omega}y^n& \mbox{in}& Q,\\
	\dis  a(x)y^n_x(\cdot,x)|_{x=\pm 1}&=&0& \mbox{in}& (0,T), \\
	\dis  y^n(0,\cdot)&=&y^0 &\mbox{in}&\Omega.
	\end{array}
	\right.
	\end{equation}
Since $y^n\in W_a(0,T)$ is a weak solution of \eqref{pn1}  in the sense of Definition \ref{weaksolution}, then we have
\begin{equation}\label{p01}
\begin{array}{lll}
\dis -\int_Q \phi_ty^n\, \dq + \dis \int_{Q}a(x)y^n_x\phi\,\dq=
\dis \int_{\omega_T}v^ny^n\, \phi \;\dq +\dis \int_\Omega y^0\,\phi(0) \,\dx,
\end{array}
\end{equation}		
for any $\phi\in H(Q)$.
Now, passing to the limit in \eqref{p01} as $n\to +\infty$, while using \eqref{c2} and \eqref{es4}, we obtain	
\begin{equation*}
\begin{array}{lll}
\dis -\int_Q \phi_ty \,\dx\, dt + \dis \int_{Q}a(x)y_x\phi\,\dq= 
\dis \int_{\omega_T}uy\, \phi \;\dq+\int_\Omega y^0\,\phi(0)\, \dx,
\end{array}
\end{equation*}	
for any $\phi\in H(Q)$. Hence, we deduce that $y=y(u)$ is a weak solution to \eqref{model} with $v=u$ and $f=0$ in the sense of Definition \ref{weaksolution}. 
Now let $\phi \in L^2((0,T);H^1_a(\Omega))\cap H^1((0,T);L^2(\Omega))$. If we multiply the first equation of \eqref{pn1} by $\phi$ and we integrate by parts over $Q$, we obtain 
	\begin{equation}\label{p010}
	\begin{array}{lll}
	\dis \int_{\Om}y^n(T)\phi(T)\,\dx-\int_Q \phi_ty^n\, \dq + \dis \int_{Q}a(x)y^n_x\phi\,\dq=
	\dis \int_{\omega_T}v^ny^n\, \phi \;\dq
	+\dis \int_\Omega \rho^0\,\phi(0)\, \dx.
	\end{array}
	\end{equation}	
Now, passing to the limit in \eqref{p010} as $n\to +\infty$, while using \eqref{c1b}, \eqref{c2} and \eqref{es4}, we obtain	
	\begin{equation*}
	\begin{array}{lll}
	\dis \int_{\Om}\eta \phi(T) \,\dx -\int_Q \phi_ty \,\dx\, dt + \dis \int_{Q}a(x)y_x\phi\,\dq=
	\dis \int_{\omega_T}uy\, \phi \;\dq
	+\dis \int_\Omega \rho^0\,\phi(0)\, \dx,
	\end{array}
	\end{equation*}
which by using again the integration by parts over $Q$,
	\begin{equation*}
	\begin{array}{lll}
	\dis \int_\Omega y^0\,\phi(0) \,\dx +\int_{\Om}\phi(T)(\eta-y(T))\ \dx+\int_Q \phi(y_t-(a(x)y_x)_x) \dq = \\
	\dis \int_{\omega_T}uy\, \phi \;\dq+\int_\Omega y^0\,\phi(0)\, \dx,\ \ 
	\forall \phi \in L^2((0,T);H^1_a(\Omega))\cap \mathcal{C}([0,T];L^2(\Omega)).
	\end{array}
	\end{equation*}
	Since  $y=y(u)$ is solution of \eqref{model}, we get from this latter identity that
	\begin{eqnarray}\label{interm11}
	\int_{\Om}\phi(T)(\eta-y(T))\, \dx=0, \quad
	\forall \phi \in L^2((0,T);H^1_a(\Omega))\cap H^1((0,T);L^2(\Omega)).
	\end{eqnarray}		
	Hence,  we deduce that
	\begin{eqnarray}\label{limp51}
	\eta=y(T)\quad \text{in}\quad \Om.
	\end{eqnarray}
	Combining \eqref{c1b} and \eqref{limp51}, we obtain \eqref{convroT}.
		Moreover, using \eqref{c1a}, \eqref{ajout5}, \eqref{convroT}  and the lower semi-continuity of the functional cost $J$, it follows that
		\begin{eqnarray*}
			J(G(u),u)&\leq &\liminf_{n\to+ \infty}J(G(v^n),v^n)=\inf_{v\in\U}J(G(v),v)
		\end{eqnarray*}	
	and the proof is complete. 		
	\end{proof}

	\section{Regularity results of the control-to-state mapping}
	\label{controltostate}
	In the rest of the paper, we assume that $y^0, y^d\in L^\infty(\Omega)$, so the weak solution $y$ of \eqref{model} belongs to $W_a(0,T)\cap L^\infty(Q)$ (see Corollary \ref{theominmax}). 
	
	Using the Implicit Function Theorem, we establish in this section some regularity results on the control-to-state operator. The proofs are inspired from \cite{aronna2021}. We define the mapping 
	\begin{equation}\label{defG}
	(y, u)\mapsto\mathcal{G}(y,u):= (y_t-(a(x)y_x)_x-u\chi_{\omega }y,y(0)-y^0)
	\end{equation}
	from $ W_a(0,T)\times L^\infty(\omega_T)\to L^2((0,T);(H^1_a(\Omega))')\times L^2(\Omega).$
	Then, the state equation $y$ solution of \eqref{model} can be viewed as the equation
	\begin{equation}\label{eqG}
	\mathcal{G}(y,u)=0.
	\end{equation}
	
	\begin{proposition}\label{lemmeG}
		The mapping $\mathcal{G}$ is of class $\mathcal{C}^{\infty}$. Moreover, the control-to-state mapping $G:L^\infty(\omega_T)\to W_a(0,T), u\mapsto y$ is also of class $\mathcal{C}^{\infty}$.
	\end{proposition}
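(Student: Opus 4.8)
The plan is to verify the hypotheses of the Implicit Function Theorem for the map $\mathcal{G}$ defined in \eqref{defG}, and then read off the smoothness of $G$. First I would observe that $\mathcal{G}$ splits as the sum of a bounded linear part and a bilinear part: writing $\mathcal{G}(y,u) = \mathcal{L}(y) - \mathcal{B}(u,y)$ (up to the affine term $-y^0$ in the second component), where $\mathcal{L}(y) = (y_t - (a(x)y_x)_x,\, y(0))$ is bounded linear from $W_a(0,T)$ into $L^2((0,T);(H^1_a(\Omega))')\times L^2(\Omega)$, and $\mathcal{B}(u,y) = u\chi_\omega y$. The linear part is trivially $\mathcal{C}^\infty$. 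For the bilinear part one must check that $(u,y)\mapsto u\chi_\omega y$ is a bounded bilinear map from $L^\infty(\omega_T)\times W_a(0,T)$ into $L^2((0,T);(H^1_a(\Omega))')$; this follows from $W_a(0,T)\hookrightarrow L^2(Q)$ (indeed $W_a(0,T)\subset C([0,T];L^2(\Omega))$ by \eqref{contWTA}) together with $L^2(Q)\hookrightarrow L^2((0,T);(H^1_a(\Omega))')$ via Lemma~\ref{embed}, so that $\|u\chi_\omega y\|_{L^2((0,T);(H^1_a(\Omega))')} \le C\|u\|_{L^\infty(\omega_T)}\|y\|_{W_a(0,T)}$. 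A bounded bilinear (hence also the affine term, which is constant) map is automatically of class $\mathcal{C}^\infty$, with derivatives of order $\ge 3$ vanishing; this gives the first assertion.

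Next I would compute the partial derivative $\partial_y \mathcal{G}(y,u)$ at a point $(\bar y,\bar u)$ with $\mathcal{G}(\bar y,\bar u)=0$, namely the bounded linear operator $h\mapsto (h_t-(a(x)h_x)_x - \bar u\chi_\omega h,\, h(0))$ from $W_a(0,T)$ to $L^2((0,T);(H^1_a(\Omega))')\times L^2(\Omega)$. To apply the Implicit Function Theorem I must show this operator is an isomorphism. Surjectivity and injectivity together amount to: for every $(f,p^0)\in L^2((0,T);(H^1_a(\Omega))')\times L^2(\Omega)$ there is a unique $h\in W_a(0,T)$ solving $h_t-(a(x)h_x)_x = \bar u\chi_\omega h + f$ with $h(0)=p^0$ — but this is exactly the content of Theorem~\ref{theoremexistence0} (with $v=\bar u\in L^\infty(\omega_T)$, which holds since $\bar u$ is an admissible control). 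Boundedness of the inverse is the estimate \eqref{estimation0}. Hence $\partial_y\mathcal{G}(\bar y,\bar u)$ is a Banach-space isomorphism by the bounded inverse theorem.

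The Implicit Function Theorem then yields, in a neighborhood of any $(\bar y,\bar u)$ on the zero set, a $\mathcal{C}^\infty$ map $u\mapsto y(u)$ with $\mathcal{G}(y(u),u)=0$; by the uniqueness part of Theorem~\ref{theoremexistence}, this locally defined map must coincide with the global control-to-state mapping $G$. Since the point $\bar u\in L^\infty(\omega_T)$ was arbitrary and $\mathcal{C}^\infty$ is a local property, $G$ is of class $\mathcal{C}^\infty$ on all of $L^\infty(\omega_T)$, which is the second assertion.

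I expect the only genuinely delicate point to be the verification that the bilinear map $\mathcal{B}$ really lands in $L^2((0,T);(H^1_a(\Omega))')$ with the stated bound — one has to be slightly careful because functions in $H^1_a(\Omega)$ need not be bounded in the strongly degenerate case (cf. the remark after Lemma~\ref{embed}), so one should not attempt an $L^\infty$-$H^1_a$ pairing but rather use the continuous embedding $L^2(Q)\hookrightarrow L^2((0,T);(H^1_a(\Omega))')$ dualized from Lemma~\ref{embed}. Everything else is a routine application of the abstract Implicit Function Theorem in Banach spaces, using only Theorems~\ref{theoremexistence0} and \ref{theoremexistence} which are already established.
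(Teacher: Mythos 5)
Your proposal is correct and follows essentially the same route as the paper: decompose $\mathcal{G}$ into a bounded linear part and a bounded bilinear part (both automatically $\mathcal{C}^\infty$), identify $\partial_y\mathcal{G}(y,u)\varphi=(\varphi_t-(a(x)\varphi_x)_x-u\varphi\chi_{\omega},\varphi(0))$ as an isomorphism via the well-posedness result of Theorem~\ref{theoremexistence0}, and conclude by the Implicit Function Theorem. Your extra care in justifying the continuity of the bilinear term through the embedding $L^2(Q)\hookrightarrow L^2((0,T);(H^1_a(\Omega))')$, and in matching the locally defined implicit function with the global map $G$ by uniqueness, only makes explicit what the paper leaves implicit.
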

	\begin{proof}
		For all $\phi\in L^2((0,T);H^1_a(\Omega))$, we can write the first component $\mathcal{G}_1$ of $\mathcal{G}$ as
		$$\mathcal{G}_1(y,u)(\phi):=\left\langle y_t,\phi\right\rangle+ \int_Q a(x)y_x\phi_x\, \dq-\int_{\omega_T}uy\, \phi \;\dq.$$ On the one  hand, the first two terms of $\mathcal{G}_1$ define linear and continuous mapping from $ W_a(0,T)$ to\\ $L^2((0,T);(H^1_a(\Omega))')$. Thus they are of class $\mathcal{C}^{\infty}$. On the other hand the last term of $\mathcal{G}_1$ defines a bilinear and continuous mapping from $ W_a(0,T)\times L^\infty(\omega_T)$ to $L^2((0,T);(H^1_a(\Omega))').$ Therefore $\mathcal{G}_1$ is of class $\mathcal{C}^{\infty}$. The second component of $\mathcal{G}$ is clearly of class $\mathcal{C}^{\infty}$. 
		
		Moreover,
		$$\partial_y\mathcal{G}(y,u)\varphi=(\varphi_t-(a(x)\varphi_x)_x-u\varphi\chi_{\omega },\varphi(0)).$$
		For $u\in L^\infty(\omega_T)$,  $\varphi^0\in L^2(\Omega)$ and $f\in L^2((0,T);(H^1_a(\Omega))')$,  the Theorem \ref{theoremexistence0} shows that the problem
		\begin{equation*}
		\left\{
		\begin{array}{rllll}
		\dis  \varphi_t-(a(x)\varphi_x)_x &=& u\varphi\chi_{\omega } +f\qquad &\mbox{in}& Q,\\
		\dis a(x)\varphi_x(\cdot,x)|_{x=\pm 1}&=&0  &\mbox{in}& (0,T) ,\\
		\varphi(0,\cdot)&=& \varphi^0 &\mbox{in}& \Omega
		\end{array}
		\right.
		\end{equation*}
		has a unique weak solution $\varphi:=\varphi(\varphi^0,f)$ in $W_a(0,T)$. Moreover, $\varphi$ depends continuously on $\varphi^0\in L^2(\Omega)$ and on $f\in L^2((0,T);(H^1_a(\Omega))')$. Consequently, the operator $\dis \partial_y\mathcal{G}(y,u)$ defines an isomorphism from $W_a(0,T)$ to $L^2((0,T);(H^1_a(\Omega))')\times L^2(\Omega)$. Using the Implicit Function Theorem, we deduce that $\mathcal{G}(y,u)=0$ implicitly defines the control-to-state operator $G:u\mapsto y$ which is itself of class  $\mathcal{C}^{\infty}$. 
	\end{proof}

	
The following result establishes the Lipschitz continuity of the control-to-state mapping $G$.
	\begin{proposition}\label{prop2} Let  $v\in L^\infty(\omega_T)$ and $y^{0}\in L^\infty(\Omega)$. Then the control-to-state mapping  $v\mapsto G(v)$ is a locally Lipschitz continuous function from $L^2(\omega_T)$ into $W_a(0,T)$. More precisely, for all $v_1,v_2\in L^\infty(\omega_T)$, there is a constant $C=C(\|v_1\|_{\infty},\|v_2\|_{\infty},T)>0$ such that the following estimate holds
		\begin{equation}\label{estim20}
	\|G(v_1)-G(v_2)\|_{\mathcal{C}([0,T];L^2(\Omega))}+	\|G(v_1)-G(v_2)\|_{L^2((0,T);H^1_a(\Omega))}\leq C \|y^0\|_{L^\infty(\Omega)}\|v_1-v_2\|_{L^2(\omega_T)}.
		\end{equation}	
	\end{proposition}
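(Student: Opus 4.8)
The plan is to set up the difference $y := G(v_1) - G(v_2)$, derive the PDE it satisfies, and then apply the energy estimates already established (Theorem~\ref{theoremexistence0} together with Remark~\ref{existencef'}) treating the lower-order mismatch as a source term in $L^2(Q)$. Writing $y_1 := G(v_1)$, $y_2 := G(v_2)$, both solving \eqref{model} with the respective controls, subtraction gives
\begin{equation*}
\left\{
\begin{array}{rllll}
\dis y_{t}-\left(a(x)y_{x}\right)_{x} &=&v_1\chi_{\omega}y + (v_1-v_2)\chi_{\omega}y_2& \mbox{in}& Q,\\
\dis  a(x)y_x(\cdot,x)|_{x=\pm 1}&=&0& \mbox{in}& (0,T), \\
\dis  y(0,\cdot)&=&0 &\mbox{in}&\Omega.
\end{array}
\right.
\end{equation*}
This is exactly \eqref{model1} with control $v_1$, initial datum $0$, and source $f = (v_1-v_2)\chi_{\omega}y_2$.

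First I would check that $f \in L^2(Q)$ (indeed $L^2(\omega_T)$): by Corollary~\ref{theominmax} applied to $y_2 = G(v_2)$ we have $\|y_2\|_{L^\infty(Q)} \leq e^{(\|v_2\|_\infty+1)T}\|y^0\|_{L^\infty(\Omega)}$, hence
$$
\|f\|_{L^2(Q)} = \|(v_1-v_2)\chi_\omega y_2\|_{L^2(\omega_T)} \leq \|y_2\|_{L^\infty(Q)}\,\|v_1-v_2\|_{L^2(\omega_T)} \leq e^{(\|v_2\|_\infty+1)T}\|y^0\|_{L^\infty(\Omega)}\,\|v_1-v_2\|_{L^2(\omega_T)}.
$$
Then I would invoke Theorem~\ref{theoremexistence0} in the form given by Remark~\ref{existencef'} (estimate \eqref{estimation1} with $\|f\|_{L^2((0,T);(H^1_a(\Omega))')}$ replaced by $\|f\|_{L^2(Q)}$, and here $y^0$ replaced by the zero initial datum), which yields
$$
\|y\|_{\C([0,T];L^2(\Omega))}+ \|y\|_{L^2((0,T);H^1_a(\Omega))} \leq 2e^{(\|v_1\|_\infty+1)T}\,\|f\|_{L^2(Q)}.
$$
Combining the last two displays gives \eqref{estim20} with $C = 2e^{(\|v_1\|_\infty+1)T}e^{(\|v_2\|_\infty+1)T}$, which depends only on $\|v_1\|_\infty$, $\|v_2\|_\infty$ and $T$ as claimed.

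There is no serious obstacle here — the proof is essentially a bookkeeping exercise once the difference equation is recognized as an instance of \eqref{model1}. The only point requiring a little care is the justification that the source term lies in $L^2(Q)$, which is where the $L^\infty$-bound on $y_2$ (Corollary~\ref{theominmax}) is essential; without the maximum principle one would only control $y_2$ in $L^2((0,T);H^1_a(\Omega))$, and since $H^1_a(\Omega)$ does not embed into $L^\infty(\Omega)$ in the strongly degenerate case (cf.\ the Remark after Lemma~\ref{embed}), the product $(v_1-v_2)y_2$ could not be estimated in $L^2$ by the $L^2$-norm of $v_1-v_2$ alone. Having the $L^\infty$ regularity of the states, guaranteed by the standing assumption $y^0 \in L^\infty(\Omega)$, is precisely what makes the Lipschitz estimate from $L^2(\omega_T)$ (rather than merely from $L^\infty(\omega_T)$) work.
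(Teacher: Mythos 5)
Your proposal is correct and follows essentially the same route as the paper: form the difference equation, treat $(v_1-v_2)\chi_\omega G(v_2)$ as an $L^2(Q)$ source, apply the energy estimate \eqref{estimation0} (via Remark~\ref{existencef'}), and bound the source using the $L^\infty$ bound \eqref{minmaxy} on $G(v_2)$. Your explicit tracking of the constant and the remark on why the maximum principle is indispensable in the strongly degenerate setting are both accurate.
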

	\begin{proof}
		Let $v_1,v_2\in L^\infty(\omega_T)$ and  $z:=y(v_1)-y(v_2)$, where $y(v_1)$ and $y(v_2)$ are solutions of \eqref{model} with $v=v_1$ and $v=v_2$, respectively. Then, $z$ satisfies the following problem:
		\begin{equation}\label{a1}
		\left\{
		\begin{array}{lllll}
		\dis  z_t-(a(x)z_x)_x&=&(v_1z+(v_1-v_2)y(v_2))\chi_{\omega } &\hbox{in} & Q,\\
		\dis a(x)z_x(\cdot,x)|_{x=\pm 1}&=&0  &\mbox{in}& (0,T),\\
		z(0,\cdot)&=&0 &\hbox{in}& \Omega.
		\end{array}
		\right.
		\end{equation}
	Using the estimation \eqref{estimation0} with $f=(v_1-v_2)y(v_2)\chi_{\omega }\in L^2(Q)$, it follows that
		\begin{equation}\label{ajout0}
		\|z\|_{\mathcal{C}([0,T];L^2(\Omega))}+ \|z\|_{L^2((0,T);H^1_a(\Omega))} \leq 2C\|G(v_2)\|_{L^\infty(Q)}\|v_1-v_2\|_{L^2(\omega_T)},
		\end{equation}
		for some $C=C(\|v_1\|_{\infty},T)>0$.
		Since $y^0\in L^\infty(\Omega)$, we can apply Theorem \ref{theominmax}. Consequently, using \eqref{minmaxy}, we obtain:
		\begin{equation}\label{ajout4}
		\|G(v_2)\|_{L^\infty(Q)}\leq C(\|v_2\|_{\infty},T) \|y^0\|_{L^\infty(\Omega)}.
		\end{equation}
		Therefore, by combining \eqref{ajout0} and \eqref{ajout4}, we deduce that
		\begin{eqnarray*}
			\|z\|_{\C([0,T];L^2(\Omega))}+ \|z\|_{L^2((0,T);H^1_a(\Omega))} \leq C(\|v_1\|_{\infty},\|v_2\|_{\infty},T) \|y^0\|_{L^\infty(\Omega)}\|v_1-v_2\|_{L^2(\omega_T)},
		\end{eqnarray*}
	from which we deduce \eqref{estim20}.
	\end{proof}
	

	\section{First-order necessary optimality conditions}\label{firstorder}

	Before going further, we define the reduced cost functional as follows
	\begin{equation}\label{defJ1}
	\mathcal{J}(v):=J(G(v),v).
	\end{equation} 
It is worth noting that $\mathcal{J}$ is continuously Fr\'echet differentiable due to the fact that both $J$ and $G$ possess this property. 
The main purpose of this section is to establish the following result.

\begin{theorem}[First order necessary optimality conditions]\label{theoSO}
	Let $\alpha>0,$  $v\in \mathcal{U}$ and $y^{0},y^d\in L^\infty(\Omega)$. Let $u$ be an $L^\infty$-local minimum for the minimization problem \eqref{optimal}. Then the following identity holds
	\begin{equation}\label{ineq}
	\mathcal{J}'(u)(v-u)\geq 0\;\;\;\text{for every}\;\;\; v\in \U.
	\end{equation}
	
Moreover, there exist $y, q\in W_a(0,T)\cap L^\infty(Q)$ such that the triple $(y,u,q)$ satisfies	
	\begin{equation}\label{ad11}
	\left\{
	\begin{array}{lllll}
	\dis  y_t-(a(x)y_x)_x&=&uy\chi_{\omega } &in & Q,\\
	\dis a(x)y_x(\cdot,x)|_{x=\pm 1}&=&0  &\mbox{in}& (0,T),\\
	y(0,\cdot)&=&y^0 &in& \Omega,
	\end{array}
	\right.
	\end{equation}	
	\begin{equation}\label{ad21}
	\left\{
	\begin{array}{lllll}
	\dis  -q_t-(a(x)q_x)_x&=&uq\chi_{\omega } &in & Q,\\
	\dis a(x)q_x(\cdot,x)|_{x=\pm 1}&=&0  &\mbox{in}& (0,T),\\
	q(T,\cdot)&=&y(T;u)-y^d&in& \Omega,
	\end{array}
	\right.
	\end{equation}
	
	\begin{eqnarray}\label{neccoptcond}
	\int_{\omega_T}(\alpha u+y(u)q)(v-u)\ \dq\geq 0, \quad \forall v\in \mathcal{U},
	\end{eqnarray}
	and equivalently
	\begin{equation}\label{contr1}
		u=\min\left(\max\left(m,-\frac{q}{\alpha}y(u)\right),M\right) \text{ in }\omega_T.
	\end{equation}
	
\end{theorem}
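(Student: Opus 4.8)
The plan is to derive the first-order conditions in the standard way for a smooth cost functional minimized over a closed convex set, then identify the gradient of the reduced functional via an adjoint state. First I would invoke the convexity of $\U$: for $u$ an $L^\infty$-local minimum and any $v\in\U$, the segment $u+t(v-u)$ lies in $\U$ for $t\in[0,1]$ and, for $t$ small, inside $B^\infty_\varepsilon(u)$; hence $t\mapsto\mathcal J(u+t(v-u))$ has a minimum at $t=0$ on $[0,1]$, so its right derivative at $0$ is nonnegative, which is exactly \eqref{ineq}. This uses only that $\mathcal J$ is Fr\'echet differentiable, established right after \eqref{defJ1} via Proposition~\ref{lemmeG}.

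Next I would compute $\mathcal J'(u)(v-u)$ explicitly. By the chain rule $\mathcal J'(u)h = (G'(u)h)(T)\cdot(y(T;u)-y^d)_{L^2(\Omega)} + \alpha\,(u,h)_{L^2(\omega_T)}$, where, by differentiating $\mathcal G(G(u),u)=0$, the linearized state $\zeta:=G'(u)h$ solves $\zeta_t-(a(x)\zeta_x)_x = u\chi_\omega\zeta + h\chi_\omega y(u)$ with $\zeta(0)=0$ and the Neumann-type condition at $x=\pm1$ (well-posed by Theorem~\ref{theoremexistence0}, since $h\chi_\omega y(u)\in L^2(Q)$ by Corollary~\ref{theominmax}). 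To remove the dependence on $\zeta$ I introduce the adjoint state $q$ solving the backward problem \eqref{ad21}; existence and uniqueness of $q\in W_a(0,T)\cap L^\infty(Q)$ follow by the time-reversal $t\mapsto T-t$ together with Theorem~\ref{theoremexistence0} and Corollary~\ref{theominmax} applied to the resulting forward problem with terminal-now-initial datum $y(T;u)-y^d\in L^\infty(\Omega)$. Testing the $\zeta$-equation against $q$ and the $q$-equation against $\zeta$, integrating over $Q$, integrating by parts in $t$ (using $\zeta(0)=0$ and the terminal condition $q(T)=y(T;u)-y^d$) and in $x$ (the boundary terms $a(x)\zeta_xq$ and $a(x)q_x\zeta$ vanish at $x=\pm1$ because $a(\pm1)=0$ and $a\zeta_x$, $aq_x$ lie in $H^1(\Omega)$, hence are continuous up to the boundary), the symmetric $\int_Q u\chi_\omega\zeta q$ terms cancel and one is left with $(\zeta(T),y(T;u)-y^d)_{L^2(\Omega)} = \int_{\omega_T} h\,y(u)\,q\,\dq$. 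Substituting gives $\mathcal J'(u)h = \int_{\omega_T}(\alpha u + y(u)q)h\,\dq$, and taking $h=v-u$ in \eqref{ineq} yields the variational inequality \eqref{neccoptcond}.

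Finally I would derive the pointwise projection formula \eqref{contr1} from \eqref{neccoptcond}. This is the classical equivalence: a variational inequality $\int_{\omega_T}(\alpha u + y(u)q)(v-u)\geq0$ for all $v$ in the box $\U=\{m\le w\le M\}$ holds if and only if, for a.e.\ $(t,x)\in\omega_T$, $u(t,x)$ is the pointwise minimizer over $[m,M]$ of $s\mapsto \frac{\alpha}{2}s^2 + y(u)q\,s$, equivalently $u(t,x)=\mathrm{Proj}_{[m,M]}\!\big(-\tfrac1\alpha y(u)(t,x)q(t,x)\big)=\min(\max(m,-\tfrac{q}{\alpha}y(u)),M)$; one proves it by a standard Lebesgue-point / localization argument, testing with $v$ equal to $u$ off a small set and to an admissible constant on it. I expect the main technical obstacle to be the rigorous justification of the integration-by-parts (Green/duality) identity linking $\zeta$ and $q$ in the weighted, degenerate setting: one must argue within the weak-solution framework of Definition~\ref{weaksolution} (using $q$, resp.\ $\zeta$, as an admissible test function after a density/truncation argument, since neither is a priori in the test space $H(Q)$ without checking $q(T)=y(T)-y^d$, $\zeta(0)=0$ and the $H^1_a$-regularity) and verify that the degenerate boundary terms genuinely vanish; everything else is routine once Theorem~\ref{theoremexistence0}, Corollary~\ref{theominmax} and Proposition~\ref{lemmeG} are in hand.
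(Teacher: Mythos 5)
Your proposal is correct and follows essentially the same route as the paper: the convexity/difference-quotient argument for \eqref{ineq}, the linearized state equation for $G'(u)h$ (Proposition \ref{diff0}), the time-reversed adjoint problem made well-posed via Theorem \ref{theoremexistence} and Corollary \ref{theominmax}, and the duality pairing yielding $\mathcal J'(u)h=\int_{\omega_T}(\alpha u+y(u)q)h\,\dq$, from which \eqref{neccoptcond} and the pointwise projection formula \eqref{contr1} follow. Your explicit flagging of the integration-by-parts justification in the degenerate weak-solution framework is a point the paper itself treats only briefly, but the overall argument is the same.
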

	
%
Before proving the previous result, we establish some important results needed further.
The proofs follow similar arguments as in \cite{cyrillek2022}. We just provide a brief overview of the proofs.
	\begin{proposition}\label{diff0}
		Let $G:L^\infty(\omega_T)\to W_a(0,T)\cap L^\infty(Q), v\mapsto y$ be the control-to-state operator,  where $y$ is the solution of \eqref{model}. Then, the directional derivate of $G$ in every direction $w\in L^\infty(\omega_T)$ is given by
		$$G'(u)w=\rho,$$
		where the state $y=G(u)$ corresponds to $u$ and $\rho\in W_a(0,T)$ is the unique weak solution of
		\begin{equation}\label{diff1}
		\left\{
		\begin{array}{lllll}
		\dis  \rho_t-(a(x)\rho_x)_x &=&(u\rho+wy)\chi_{\omega } &\hbox{in} & Q,\\
		\dis a(x)\rho_x(\cdot,x)|_{x=\pm 1}&=&0  &\mbox{in}& (0,T),\\
		\rho(0,\cdot)&=&0 &\hbox{in}& \Omega.
		\end{array}
		\right.
		\end{equation}	
	\end{proposition}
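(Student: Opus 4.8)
The statement asserts that $G$ is directionally (in fact Gateaux) differentiable and identifies the derivative $\rho = G'(u)w$ as the solution of the linearized problem \eqref{diff1}. Since Proposition~\ref{lemmeG} already establishes that $G$ is of class $\mathcal{C}^\infty$ via the Implicit Function Theorem, the cleanest route is to differentiate the identity $\mathcal{G}(G(u),u)=0$ and read off $G'(u)$ from the formula for $\partial_y\mathcal{G}$ and $\partial_u\mathcal{G}$; however, since the paper announces a self-contained sketch ``following similar arguments as in \cite{cyrillek2022},'' I would instead give the direct difference-quotient argument, which also re-proves the differentiability by hand.

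First I would fix $u,w\in L^\infty(\omega_T)$ and, for $t\neq 0$ small, set $y_t := G(u+tw)$, $y:=G(u)$, and $\rho_t := (y_t-y)/t$. Subtracting the weak formulations of \eqref{model} for the controls $u+tw$ and $u$, one checks that $\rho_t$ solves
\begin{equation*}
\left\{
\begin{array}{lllll}
\dis (\rho_t)_s-(a(x)(\rho_t)_x)_x &=&\big((u+tw)\rho_t+wy\big)\chi_{\omega } &\hbox{in} & Q,\\
\dis a(x)(\rho_t)_x(\cdot,x)|_{x=\pm 1}&=&0  &\mbox{in}& (0,T),\\
\rho_t(0,\cdot)&=&0 &\hbox{in}& \Omega,
\end{array}
\right.
\end{equation*}
where I use $s$ for the time variable to avoid clashing with the increment $t$. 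By Remark~\ref{existencef'} (with right-hand side $f=wy\chi_\omega\in L^2(Q)$, using $y\in L^\infty(Q)$ from Corollary~\ref{theominmax}) together with the estimate \eqref{estimation1}-type bound from Theorem~\ref{theoremexistence0}, the family $\{\rho_t\}$ is bounded in $W_a(0,T)$ uniformly in $t$, since $\|u+tw\|_\infty$ stays bounded for $|t|\le 1$. Hence, up to a subsequence, $\rho_t \rightharpoonup \rho$ weakly in $L^2((0,T);H^1_a(\Omega))$ and weakly in $W_a(0,T)$, with $\rho_t(T)\rightharpoonup\rho(T)$ weakly in $L^2(\Omega)$.

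Next I would pass to the limit in the weak formulation for $\rho_t$. The only genuinely nonlinear term is $\int_{\omega_T}(u+tw)\rho_t\phi$; here I split it as $\int_{\omega_T}u\rho_t\phi + t\int_{\omega_T}w\rho_t\phi$, note the second term vanishes as $t\to0$ (bounded $\rho_t$ times $t$), and for the first I use that $u\phi\in L^2(\omega_T)$ is a fixed test function against the weakly convergent $\rho_t$. Alternatively, and more robustly, one invokes the Lipschitz estimate of Proposition~\ref{prop2}: $\|y_t-y\|_{L^2((0,T);H^1_a(\Omega))}\le C|t|\,\|w\|_{L^2(\omega_T)}\to0$, so in fact $\rho_t$ is bounded and $y_t\to y$ strongly, which upgrades the convergence $u\rho_t\to u\rho$ to a strong-times-weak product and legitimizes the limit. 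Thus the limit $\rho$ satisfies the weak formulation of \eqref{diff1}; by Theorem~\ref{theoremexistence0} that problem has a unique solution, so the whole family $\rho_t$ (not merely a subsequence) converges to it, and moreover the convergence can be shown to hold strongly in $\mathcal{C}([0,T];L^2(\Omega))\cap L^2((0,T);H^1_a(\Omega))$ by writing the equation for $\rho_t-\rho$ and applying \eqref{estimation0} once more. This gives $G'(u)w=\rho$ as a Gateaux derivative, and combined with Proposition~\ref{lemmeG} (which already grants Fréchet $\mathcal{C}^\infty$ regularity) the two notions agree.

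\textbf{Main obstacle.} The delicate point is the passage to the limit in the bilinear term $(u+tw)\rho_t$: weak convergence of $\rho_t$ alone does not immediately control $u\rho_t\phi$ unless one is careful that $u\phi$ is a legitimate $L^2$ test function — which it is, since $u\in L^\infty(\omega_T)$ and $\phi\in L^2((0,T);H^1_a(\Omega))\hookrightarrow L^2(Q)$ — or, more cleanly, one first secures the strong convergence $y_t\to y$ from Proposition~\ref{prop2} and thereby avoids any compactness subtlety. Everything else is a routine energy estimate plus uniqueness.
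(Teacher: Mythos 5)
Your proposal is correct and ultimately takes the same route as the paper: a difference-quotient argument in which the equation satisfied by the quotient minus the candidate derivative is combined with the a priori estimate of Theorem \ref{theoremexistence01} and the Lipschitz continuity of $G$ (Proposition \ref{prop2}) to obtain strong convergence to $\rho$. The only difference is that your weak-compactness and limit-identification detour is unnecessary: the paper defines $\rho$ up front as the solution of \eqref{diff1}, sets $p_\lambda=e^{-rt}(y_\lambda-\rho)$, and estimates it directly via \eqref{estimation01}, which is precisely the ``upgrade to strong convergence'' step you append at the end of your argument.
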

	
	\begin{proof}
		Let $v,w\in L^\infty(\omega_T)$.
To establish the existence and uniqueness of $\rho\in W_a(0,T)$ and the corresponding estimate, we can apply Theorem \ref{theoremexistence}, with $f=wy\chi_{\omega}\in L^2(Q)\hookrightarrow L^2((0,T);(H^1_a(\Omega))')$.	
	Now, let $\lambda>0$ and set $\dis y_\lambda:=\frac{y(v+\lambda w)-y(v)}{\lambda}.$ Then,
		$y_\lambda$ is solution to the problem
		\begin{equation*}
		\left\{
		\begin{array}{lllll}
		\dis  (y_\lambda)_t-(a(x)(y_\lambda)_x)_x &=&(vy_\lambda+wy(v+\lambda w))\chi_{\omega } &\hbox{in} & Q,\\
		\dis a(x)(y_\lambda)_x(\cdot,x)|_{x=\pm 1}&=&0  &\mbox{in}& (0,T),\\
		y_\lambda(0,\cdot)&=&0 &\hbox{in}& \Omega.
		\end{array}
		\right.
		\end{equation*}
		Define $p_\lambda:=e^{-rt}(y_\lambda-\rho)$, with $r>0$. Then, $p_\lambda$ is a solution to
		\begin{equation}\label{a3}
		\left\{
		\begin{array}{lllll}
		\dis  (p_\lambda)_t-(a(x)(p_\lambda)_x)_x+rp_\lambda &=&[vp_\lambda+e^{-rt}w(y(v+\lambda w)-y(v))]\chi_{\omega } &\hbox{in} & Q,\\
		\dis a(x)(p_\lambda)_x(\cdot,x)|_{x=\pm 1}&=&0  &\mbox{in}& (0,T),\\
		p_\lambda(0,\cdot)&=&0 &\hbox{in}& \Omega.
		\end{array}
		\right.
		\end{equation}	
			Using the estimation \eqref{estimation01}, we obtain that
		\begin{eqnarray*}
			\|p_\lambda\|_{L^2((0,T);H^1_a(\Omega))}\leq \|w\|_{\infty}\|y(v+\lambda w)-y(v)\|_{L^2((0,T);H^1_a(\Omega))}.	
		\end{eqnarray*}
		By taking the limit as $\lambda\to 0$ in this latter inequality and using the Proposition \ref{prop2}, we obtain that $p_\lambda\to 0$ strongly in $L^2((0,T);H^1_a(\Omega))$. Hence $y_\lambda\to \rho$ strongly in $L^2((0,T);H^1_a(\Omega))$ as $\lambda\to 0$. This proves \eqref{diff1}.
	\end{proof}

	\begin{proposition}[ Fr\'echet differentiability of $\mathcal{J}$]\label{diff4}
		Let $u\in L^\infty(\omega_T)$, $y=G(u)$ be the solution of \eqref{ad11} and $q$ be the solution of \eqref{ad21}.	Under the hypothesis of Propositions \ref{diff}, the functional $\mathcal{J}:L^\infty(\omega_T)\to \R$ is continuously Fr\'echet differentiable. Moreover for every $w\in L^\infty(\omega_T)$, we have
		\begin{equation}\label{diff5}
		\mathcal{J}'(u)w=\int_{\omega_T}(\alpha u+y q)w\, \dq.
		\end{equation}
	\end{proposition}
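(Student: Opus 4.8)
The strategy is the standard chain-rule/adjoint computation. First I would use the fact that $\mathcal{J}(v) = J(G(v),v)$ with $J$ given by \eqref{defJ} and $G$ of class $\mathcal{C}^\infty$ by Proposition \ref{lemmeG} (or at least $\mathcal{C}^1$ by Proposition \ref{diff0}), so that $\mathcal{J}$ is continuously Fr\'echet differentiable as a composition of differentiable maps. Differentiating directly, for $u,w\in L^\infty(\omega_T)$,
\begin{equation*}
\mathcal{J}'(u)w=\big(G(u)(T)-y^d,\ \rho(T)\big)_{L^2(\Omega)}+\alpha\int_{\omega_T}uw\,\dq,
\end{equation*}
where $\rho=G'(u)w$ solves the linearized system \eqref{diff1}. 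So the task reduces to rewriting the term $\big(y(T)-y^d,\rho(T)\big)_{L^2(\Omega)}$ using the adjoint state $q$ defined by \eqref{ad21}.

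The key step is the duality (integration by parts in $t$) between $\rho$ and $q$. I would multiply the equation for $\rho$ in \eqref{diff1} by $q$ and integrate over $Q$, or equivalently test the weak formulation of \eqref{ad21} against $\rho$ and that of \eqref{diff1} against $q$. Since $\rho(0,\cdot)=0$ and $q(T,\cdot)=y(T)-y^d$, integrating $\int_Q(\rho_t q+\rho(-q_t))\,\dq$ over $Q$ telescopes to $\int_\Omega \rho(T)q(T)\,\dx=\big(\rho(T),y(T)-y^d\big)_{L^2(\Omega)}$. The second-order spatial terms cancel: $\int_Q -(a(x)\rho_x)_x q\,\dq=\int_Q a(x)\rho_x q_x\,\dq=\int_Q -(a(x)q_x)_x\rho\,\dq$, using the Neumann-type boundary condition $a(x)\rho_x|_{x=\pm1}=a(x)q_x|_{x=\pm1}=0$ (this is where one must be slightly careful: the functions in $H^1_a(\Omega)$ need not be bounded, but the boundary term $a(x)\rho_x q\big|_{x=\pm1}$ vanishes because $a$ vanishes at $\pm1$ and $a\rho_x\in H^1(\Omega)\hookrightarrow \mathcal{C}(\bar\Omega)$ with value $0$ at the endpoints, by definition of $H^2_a(\Omega)$; similarly for the adjoint — the rigorous justification is via the weak formulations, which already encode these conditions). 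The zeroth-order bilinear terms also match: the $\rho$-equation contributes $\int_{\omega_T}(u\rho+wy)q\,\dq$ and the $q$-equation contributes $\int_{\omega_T}uq\rho\,\dq$, so after subtracting, the surviving term is exactly $\int_{\omega_T}wyq\,\dq$. Hence $\big(\rho(T),y(T)-y^d\big)_{L^2(\Omega)}=\int_{\omega_T}yq\,w\,\dq$, and substituting back gives \eqref{diff5}.

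The main obstacle is making the integration-by-parts identity rigorous given the low regularity: $\rho,q$ only live in $W_a(0,T)$, $q_t$ only in $L^2((0,T);(H^1_a(\Omega))')$, and the product $q_t\rho$ must be interpreted through the duality pairing, not as a pointwise integral. I would handle this exactly as in the cited references \cite{aronna2021,cyrillek2022}: use the formula $\frac{d}{dt}(\rho(t),q(t))_{L^2(\Omega)}=\langle\rho_t(t),q(t)\rangle+\langle q_t(t),\rho(t)\rangle$ valid for elements of $W_a(0,T)$ (a consequence of the continuous embedding \eqref{contWTA} and a density argument), integrate over $[0,T]$, and use the initial/terminal conditions. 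Since $f=wy\chi_\omega\in L^2(Q)$ and $y,q\in L^\infty(Q)$ by Corollary \ref{theominmax} and the maximum-principle bounds, all products appearing are genuinely integrable and the manipulations are justified. Finally, continuity of $u\mapsto\mathcal{J}'(u)$ follows from the Lipschitz continuity of $G$ (Proposition \ref{prop2}) together with the continuous dependence of $q$ on its data, which makes $u\mapsto (y(u),q(u))$ continuous into $L^2(\omega_T)^2$, so $u\mapsto\alpha u+y(u)q(u)$ is continuous in $L^1(\omega_T)$, hence $w\mapsto\mathcal{J}'(u)w$ depends continuously on $u$ in operator norm on $L^\infty(\omega_T)$.
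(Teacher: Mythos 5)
Your proposal is correct and follows essentially the same route as the paper: compute the derivative via the differentiability of $G$ (the paper uses the difference quotient together with Proposition \ref{diff0}, you invoke the chain rule via Proposition \ref{lemmeG}, which is equivalent), then eliminate $\rho(T)$ by the duality pairing between the linearized equation \eqref{diff1} and the adjoint equation \eqref{ad21}, exactly as in the paper's identity \eqref{equal1}. Your additional remarks on justifying the integration by parts in $W_a(0,T)$ and on the change of variable $t\mapsto T-t$ for the well-posedness of $q$ match the paper's treatment.
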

	
	\begin{proof}
		Let $u,w\in L^\infty(\omega_T)$, we have after some straightforward calculations and using Proposition \ref{diff0},
		\begin{equation}\label{euler1}
		\lim_{\lambda\to 0}	\frac{\mathcal{J}(u+\lambda w)-\mathcal{J}(u)}{\lambda}=\int_{\Om}\rho(T)(y(T,u)-y^d)\,\dx+\alpha\int_{\omega_T}uw\ \dq,
		\end{equation}	
		where $\rho\in W_a(0,T)$ is the
		unique solution  of \eqref{diff1}, that we recall here:
	\begin{equation*}
	\left\{
	\begin{array}{lllll}
	\dis  \rho_t-(a(x)\rho_x)_x &=&(u\rho+wy)\chi_{\omega } &\hbox{in} & Q,\\
	\dis a(x)\rho_x(\cdot,x)|_{x=\pm 1}&=&0  &\mbox{in}& (0,T),\\
	\rho(0,\cdot)&=&0 &\hbox{in}& \Omega.
	\end{array}
	\right.
	\end{equation*}	
		To interpret \eqref{euler1}, we use the adjoint state given by \eqref{ad21}, that we recall here
		\begin{equation*}
		\left\{
		\begin{array}{lllll}
		\dis  -q_t-(a(x)q_x)_x&=&uq\chi_{\omega } &in & Q,\\
		\dis a(x)q_x(\cdot,x)|_{x=\pm 1}&=&0  &\mbox{in}& (0,T),\\
		q(T,\cdot)&=&y(T;u)-y^d&in& \Omega.
		\end{array}
		\right.
		\end{equation*}
		
Make the change of variable $t\mapsto T-t$ in this latter problem,  we have that $\varphi(x,t)=q(x,T-t)$  satisfies
		\begin{equation}\label{ad21varphi1}
		\left\{
		\begin{array}{lllll}
		\dis  \varphi_t-(a(x)\varphi_x)_x&=&\tilde{u}\varphi\chi_{\omega } &\hbox{in} & Q,\\
		\dis \dis a(x)\varphi_x(\cdot,x)|_{x=\pm 1}&=&0 &\hbox{in}&(0,T),\\
		\varphi(0,\cdot)&=&y^0-y^d &\hbox{in}& \Omega,
		\end{array}
		\right.
		\end{equation}
	where $\tilde{u}=u(T-t)$.
		Since $y^0-y^d \in L^\infty(\Omega)$ and $\tilde{u}\in L^\infty(\omega_T)$, then applying  Theorem \ref{theoremexistence} with $f=0$ and Corollary \ref{theominmax}, we deduce  that,  there exists a unique adjoint state $q\in W_a(0,T)\cap L^\infty(Q)$ solution to \eqref{ad21} in the sense of Definition \ref{weaksolution}.
		
		So, if we multiply the first equation in \eqref{diff1} by the solution $q$ of \eqref{ad21}, and integrate by parts over $Q$, we get
		\begin{equation}\label{equal1}
		\int_{\Om}y(T)(y(T;u)-y^d)\,\dx=\int_{\omega_T}wy(u)q\ \dq ,
		\end{equation}	
		which combining  with \eqref{euler1} gives
		\begin{eqnarray*}
			\mathcal{J}'(u)w=\int_{\omega_T}(\alpha u+y(u)q)w\ \dq.
		\end{eqnarray*}
		
	\end{proof}
	We now prove the main result of this section.
		\begin{proof}[Proof of Theorem \ref{theoSO}]
			Let $v\in \U$ be arbitrary. Since $\U$ is convex, then $u+\lambda(v-u)\in \U$ for all $\lambda\in (0,1]$. But $u$ is an $L^\infty$-local minimum, so $\mathcal{J}(u+\lambda(v-u))\geq \mathcal{J}(u)$ and hence 
			$$\frac{\mathcal{J}(u+\lambda(v-u))-\mathcal{J}(u)}{\lambda}\geq 0\qquad \forall \;\lambda\in (0,1].$$
By letting $\lambda\to 0$ in this latter inequality, we obtain 	\eqref{ineq}.
		We have already shown \eqref{ad11} in Theorem \ref{existcontrol1}. To complete the proof of the Theorem \ref{theoSO}, we write 
		\begin{eqnarray*}
		\lim_{\lambda\to 0}	\frac{\mathcal{J}(u+\lambda w)-\mathcal{J}(u)}{\lambda} \geq 0\quad \forall v\in \mathcal{U}.
		\end{eqnarray*}	
		Using Proposition \ref{diff4}, with $w=v-u$, we obtain that
		%
		\begin{eqnarray*}
			\int_{\omega_T}(\alpha u+y(u)q)(v-u)\ \dq\geq 0 \quad \forall v\in \mathcal{U},
		\end{eqnarray*}
	where $q$ is the solution of \eqref{ad21}.
			
	\end{proof}

\begin{remark}\label{remark4}
	The relation \eqref{ineq} (equivalently \eqref{neccoptcond}) is called first order necessary conditions.
	We notice that from Theorem \ref{theoSO}, a necessary and sufficient condition for \eqref{neccoptcond} to hold is that for a.e $(t,x)\in Q$,
	\begin{equation}\label{eq0}
	\left\{
	\begin{array}{lllll}
	\dis u=  m &if& \alpha u(t,x)+y(u)q(t,x)>0,\\
	\dis u\in [m,M]&if&\alpha u(t,x)+y(u)q(t,x)=0,\\
	u=M&if&\alpha u(t,x)+y(u)q(t,x)<0.
	\end{array}
	\right.
	\end{equation}
	Note that from \eqref{eq0}, we have that $|\alpha u+y(u)q|>0$ implies $u=m$ or $u=M$.
\end{remark}

	\begin{remark}\label{contadj}
		By a change of variable $t\mapsto T-t$, one can show that the solution $q$ of the adjoint state \eqref{ad21} which belongs to $W_a(0,T)\cap L^\infty(Q)$ satisfies the estimates:
		\begin{equation}\label{estimation2}
\|q\|_{\C([0,T];L^2(\Omega))}+ \|q\|_{L^2((0,T);H^1_a(\Omega))} \leq C(\|u\|_{\infty},T)\left[\|y^0\|_{L^\infty(\Omega) }+\|y^d\|_{L^\infty(\Omega) }\right]
		\end{equation}
		and 
		\begin{equation}\label{qinfini}
		\|q\|_{L^\infty(Q)} \leq e^{(\|u\|_{\infty}+1)T}\left[\|y^0\|_{L^\infty(\Omega) }+\|y^d\|_{L^\infty(\Omega) }\right].
		\end{equation}
	\end{remark}
We end this section by the following important results.

\begin{lemma}\label{cont1}
For any $u\in L^\infty(\omega_T)$, the linear mapping $v\mapsto \mathcal{J}'(u)v$ can be extended to a continuous linear mapping $\mathcal{J}'(u):L^2(\omega_T)\to \mathbb{R}$, as defined by \eqref{diff5}.
\end{lemma}	
\begin{proof} Let $u\in L^\infty(\omega_T)$ and $v\in L^2(\omega_T)$. From \eqref{diff5}, we derive the expression : 
	\begin{equation*}
	\mathcal{J}'(u)v=\int_{\omega_T}(\alpha u+y q)v\, \dq,
	\end{equation*}
	where $y$ and $q$ are solutions to \eqref{ad11} and \eqref{ad21}, respectively.
	Then using \eqref{minmax} and \eqref{estimation2}, there exists a constant $C=C\left(\alpha, \|u\|_{\infty},T, \|y^0\|_{L^\infty(\Omega)}, \|y^d\|_{L^\infty(\Omega)}\right)$ independent of $v$ such that
	\begin{equation*}
	\begin{array}{lll}
	\dis |\mathcal{J}'(u)v|\leq C\|v\|_{L^2(\Omega)}.
	\end{array}
	\end{equation*}
Hence, the 	mapping $v\mapsto \mathcal{J}'(u)v$ is a linear continuous mapping on $L^2(\omega_T)$.
\end{proof}

Arguing as the same as in the proof of Proposition \ref{prop2}, we obtain the following result.
\begin{proposition}\label{prop3} Let  $u\in L^\infty(\omega_T)$. Then the mapping  $u\mapsto q(u)$ solution of the adjoint problem \eqref{ad21} is a locally Lipschitz continuous function from $L^2(\omega_T)$ into $W_a(0,T)$. More precisely for all $u_1,u_2\in L^\infty(\omega_T)$, there exists a constant $C=C(\|u_1\|_{\infty},\|u_2\|_{\infty}, \|y^d\|_{L^\infty(\Omega)},\|y^0\|_{L^\infty(\Omega)},T)>0$  such that the following estimate 
	\begin{equation}\label{estim21}
\|q(u_1)-q(u_2)\|_{\C([0,T];L^2(\Omega))}+	\|q(u_1)-q(u_2)\|_{L^2((0,T);H^1_a(\Omega))}\leq C \|u_1-u_2\|_{L^2(\omega_T)}
	\end{equation}
	holds.
\end{proposition}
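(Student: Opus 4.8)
The plan is to mimic the proof of Proposition~\ref{prop2} almost verbatim, exploiting the structural similarity between the adjoint equation \eqref{ad21} and the state equation \eqref{model}. First I would fix $u_1,u_2\in L^\infty(\omega_T)$, write $y_i:=G(u_i)$ and $q_i:=q(u_i)$ for $i=1,2$, and set $\psi:=q_1-q_2$. Subtracting the two copies of \eqref{ad21}, one sees that $\psi$ solves a backward parabolic problem of the same type, namely
\begin{equation*}
\left\{
\begin{array}{lllll}
\dis -\psi_t-(a(x)\psi_x)_x&=&\big(u_1\psi+(u_1-u_2)q_2\big)\chi_{\omega}&\hbox{in}& Q,\\
\dis a(x)\psi_x(\cdot,x)|_{x=\pm1}&=&0&\hbox{in}&(0,T),\\
\psi(T,\cdot)&=&y_1(T)-y_2(T)&\hbox{in}&\Omega.
\end{array}
\right.
\end{equation*}
The terminal datum is controlled by Proposition~\ref{prop2}: since $\|y_1(T)-y_2(T)\|_{L^2(\Omega)}\le\|G(u_1)-G(u_2)\|_{\C([0,T];L^2(\Omega))}\le C\|y^0\|_{L^\infty(\Omega)}\|u_1-u_2\|_{L^2(\omega_T)}$.

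Next I would reduce the backward equation to a forward one by the time reversal $t\mapsto T-t$ already used in Remark~\ref{contadj}: setting $\tilde\psi(t,x)=\psi(T-t,x)$, the function $\tilde\psi$ solves a forward problem covered by Theorem~\ref{theoremexistence0} (with $f=(u_1-u_2)q_2\chi_{\omega}\in L^2(Q)$ and initial datum $y_1(T)-y_2(T)\in L^2(\Omega)$). Applying the estimate \eqref{estimation0} (in the $L^2(Q)$ form of Remark~\ref{existencef'}) yields
\begin{equation*}
\|\psi\|_{\C([0,T];L^2(\Omega))}+\|\psi\|_{L^2((0,T);H^1_a(\Omega))}\le 2e^{(\|u_1\|_\infty+1)T}\Big(\|y_1(T)-y_2(T)\|_{L^2(\Omega)}+\|(u_1-u_2)q_2\|_{L^2(\omega_T)}\Big).
\end{equation*}
For the source term, I would estimate $\|(u_1-u_2)q_2\|_{L^2(\omega_T)}\le\|q_2\|_{L^\infty(Q)}\|u_1-u_2\|_{L^2(\omega_T)}$ and then invoke the bound \eqref{qinfini} from Remark~\ref{contadj}, which gives $\|q_2\|_{L^\infty(Q)}\le e^{(\|u_2\|_\infty+1)T}\big(\|y^0\|_{L^\infty(\Omega)}+\|y^d\|_{L^\infty(\Omega)}\big)$.

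Combining the terminal-datum bound and the source bound, all constants depend only on $\|u_1\|_\infty,\|u_2\|_\infty,\|y^0\|_{L^\infty(\Omega)},\|y^d\|_{L^\infty(\Omega)}$ and $T$, and we arrive at
\begin{equation*}
\|q(u_1)-q(u_2)\|_{\C([0,T];L^2(\Omega))}+\|q(u_1)-q(u_2)\|_{L^2((0,T);H^1_a(\Omega))}\le C\|u_1-u_2\|_{L^2(\omega_T)},
\end{equation*}
which is \eqref{estim21}. The argument is essentially routine once the right ingredients are assembled; the only point requiring a little care is the treatment of the nonzero terminal datum $y_1(T)-y_2(T)$, which is where Proposition~\ref{prop2} enters and which is the reason the $L^\infty$-bounds on $y^0$ and $y^d$ propagate into the final constant. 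I do not anticipate any genuine obstacle beyond bookkeeping of these constants.
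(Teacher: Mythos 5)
Your proposal is correct and follows exactly the route the paper intends: the paper gives no explicit proof, stating only that one argues as in Proposition~\ref{prop2}, and your argument is precisely that adaptation (difference of the two adjoint states, time reversal to a forward problem, the energy estimate of Theorem~\ref{theoremexistence0}, the $L^\infty$ bound \eqref{qinfini} on $q_2$ for the source term, and Proposition~\ref{prop2} for the terminal datum $y_1(T)-y_2(T)$). The constant you obtain depends on exactly the quantities listed in the statement, so nothing is missing.
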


	\section{Second-order necessary and sufficient optimality conditions }
	\label{sufficientoptcond}
	Since the cost functional \eqref{defJ} associated to the optimization problem \eqref{optimal} is non-convex, the first order optimality conditions developed in Theorem \ref{theoSO} are necessary but not sufficient for optimality. In this section, we develop the second order optimality conditions which ensure the sufficiency. They also ensure the stability of optimal solutions with respect to perturbations of the problems such as finite element discretization \cite{casas2015a}.
	
	By employing the same arguments as presented in the proof of Proposition \ref{diff}, we arrive at the following result.
	
	\begin{proposition}[Twice Fr\'echet differentiability of $G$]\label{diff2}
		Under the hypothesis of Proposition \ref{diff}, the control-to-state mapping $G:v\mapsto y$, is twice continuously Fr\'echet differentiable from $L^\infty(\omega_T)$ into $W_a(0,T)$. Moreover, the second derivative of $G$ at $u\in L^\infty(\omega_T)$ is given by the expression
		
		$$G''(u)[w,h]=z,$$
		where $w,h\in L^\infty(\omega_T)$ and with $z\in W_a(0,T)$ being the uniquely determined weak solution of
		\begin{equation}\label{diff3}
		\left\{
		\begin{array}{lllll}
		\dis  z_t-(a(x)z_x)_x  &=&(uz+hG'(u)w+wG'(u)h)\chi_{\omega } &\hbox{in} & Q,\\
		\dis a(x)z_x(\cdot,x)|_{x=\pm 1}&=&0  &\mbox{in}& (0,T),\\
		z(0,\cdot)&=&0 &\hbox{in}& \Omega.
		\end{array}
		\right.
		\end{equation}		
	\end{proposition}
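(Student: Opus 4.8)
\textbf{Proof plan for Proposition \ref{diff2}.} The strategy mirrors the first-order analysis in Proposition \ref{diff0} but applied one differentiation level higher. Since Proposition \ref{lemmeG} already establishes that $\mathcal{G}$ is of class $\mathcal{C}^\infty$ and that $\partial_y\mathcal{G}(y,u)$ is an isomorphism, the Implicit Function Theorem immediately yields that $G$ is of class $\mathcal{C}^\infty$, hence in particular twice continuously Fr\'echet differentiable; so the only real content is the identification of $G''(u)[w,h]$ with the solution $z$ of \eqref{diff3}. First I would recall that $\rho_w := G'(u)w$ and $\rho_h := G'(u)h$ solve the linearized systems \eqref{diff1} with right-hand sides $(u\rho_w+wy)\chi_\omega$ and $(u\rho_h+hy)\chi_\omega$ respectively, where $y=G(u)$.

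The key step is to differentiate the linearized state equation with respect to the control. Fix $w\in L^\infty(\omega_T)$ and, for $\lambda>0$, set $y_\lambda := G(u+\lambda h)$ and consider the difference quotient $\zeta_\lambda := \lambda^{-1}\big(G'(u+\lambda h)w - G'(u)w\big)$. Writing down the equation satisfied by $G'(u+\lambda h)w$ (namely \eqref{diff1} with $u$ replaced by $u+\lambda h$ and $y$ replaced by $y_\lambda$) and subtracting \eqref{diff1}, one finds that $\zeta_\lambda$ solves a degenerate parabolic problem of the form
\begin{equation*}
\left\{
\begin{array}{lllll}
\dis  (\zeta_\lambda)_t-(a(x)(\zeta_\lambda)_x)_x &=&\big(u\zeta_\lambda + h\, G'(u+\lambda h)w + w\, y_{\lambda,h}\big)\chi_\omega &\hbox{in} & Q,\\
\dis a(x)(\zeta_\lambda)_x(\cdot,x)|_{x=\pm 1}&=&0  &\mbox{in}& (0,T),\\
\zeta_\lambda(0,\cdot)&=&0 &\hbox{in}& \Omega,
\end{array}
\right.
\end{equation*}
where $y_{\lambda,h} := \lambda^{-1}(y_\lambda - y)$. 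As $\lambda\to 0$, Proposition \ref{prop2} (Lipschitz continuity of $v\mapsto G(v)$ together with differentiability) gives $y_{\lambda,h}\to G'(u)h=\rho_h$ and $G'(u+\lambda h)w \to G'(u)w = \rho_w$ strongly in $L^2((0,T);H^1_a(\Omega))$, and moreover $G'(u+\lambda h)w$ stays bounded. Passing to the limit in the equation for $\zeta_\lambda$, using the a priori estimate \eqref{estimation0} with $f=(h\,G'(u+\lambda h)w + w\,y_{\lambda,h})\chi_\omega \in L^2(Q)$ to control the difference $\zeta_\lambda - z$ (introduce $e^{-rt}(\zeta_\lambda-z)$ exactly as in the proof of Proposition \ref{diff0} to absorb the zeroth-order term), one concludes that $\zeta_\lambda \to z$ strongly, where $z$ is the unique weak solution of \eqref{diff3}. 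This identifies $\partial_h\big(G'(u)w\big) = z$, i.e. $G''(u)[w,h]=z$, and symmetry in $w,h$ is manifest from the right-hand side of \eqref{diff3}.

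The main obstacle is the same one that recurs throughout this degenerate setting: justifying the limit passages and the well-posedness of \eqref{diff3} only using the weak formulation in $W_a(0,T)$, since functions in $H^1_a(\Omega)$ need not be bounded when the operator is strongly degenerate. Concretely, one must check that the forcing terms $h\,G'(u)w$, $w\,G'(u)h$ lie in $L^2(Q)$ — which follows since $h,w\in L^\infty(\omega_T)$ and $G'(u)w, G'(u)h\in W_a(0,T)\hookrightarrow C([0,T];L^2(\Omega))$ — so that Theorem \ref{theoremexistence0} (with Remark \ref{existencef'}) applies and delivers both existence/uniqueness of $z$ and the quantitative estimate needed to push $\zeta_\lambda\to z$. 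All the remaining manipulations (difference quotients, integration by parts, Gronwall-type absorption of the order-zero term) are routine repetitions of the arguments already carried out in Theorems \ref{theoremexistence01}--\ref{theoremexistence0} and Proposition \ref{diff0}, so I would state them briefly and refer back rather than reproduce them in full.
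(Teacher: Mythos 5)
Your proposal is correct and follows essentially the same route as the paper, which gives no detailed argument here and simply says the result follows "using the same arguments as in the proof" of the first-order differentiability: you form the difference quotient of the linearized equation, identify its PDE, and pass to the limit with the a priori estimate \eqref{estimation0}, exactly as in Proposition \ref{diff0}. The only cosmetic imprecision is attributing the convergence $G'(u+\lambda h)w \to G'(u)w$ to Proposition \ref{prop2} (which concerns $G$, not $G'$); the correct justification is the $\mathcal{C}^\infty$ regularity of $G$ from Proposition \ref{lemmeG}, which you already invoke.
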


\begin{proposition}[Twice Fr\'echet differentiability of $\mathcal{J}$]\label{diff44}
	Let $u\in L^\infty(\omega_T)$ and $y=G(u)$ be the solution of \eqref{ad11} and $q$ be the solution of \eqref{ad21}.	Under the hypothesis of Propositions \ref{diff2} and \ref{diff}, the functional $\mathcal{J}:L^\infty(\omega_T)\to \R$ is twice continuously Fr\'echet differentiable. Moreover for every $u,w, h\in L^\infty(\omega_T)$, we have
	\begin{equation}\label{diff6}
	\mathcal{J}''(u)[w,h]=\int_{\omega_T}[hG'(u)w+wG'(u)h]q\, \dq+\int_{\Om}(G'(u)w)(T)(G'(u)h)(T)\, \dx+\alpha\int_{\omega_T}hw\, \dq.
	\end{equation}
\end{proposition}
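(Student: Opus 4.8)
The plan is to compute the second derivative of $\mathcal{J}$ directly by differentiating the first-order expression \eqref{diff5} once more, since Proposition \ref{diff44} only asserts differentiability (already guaranteed by Proposition \ref{diff2} and the fact that $J$ is a polynomial in its arguments) together with the explicit formula \eqref{diff6}. So the real content is the identity, not the regularity. Fix $u\in L^\infty(\omega_T)$, let $y=G(u)$, $q$ the adjoint state, and set $y_w:=G'(u)w$, $y_h:=G'(u)h$, which solve the linearized system \eqref{diff1}. Starting from $\mathcal{J}'(u)w=\int_{\omega_T}(\alpha u+yq)w\,\dq$, I would differentiate in the direction $h$, using the product rule: the term $\alpha u$ contributes $\alpha\int_{\omega_T}hw\,\dq$, and the term $yq$ contributes $\int_{\omega_T}\big[(G'(u)h)\,q + y\,(q'(u)h)\big]w\,\dq$, where $q'(u)h$ denotes the derivative of the adjoint state with respect to the control.

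The main step is to identify $q'(u)h$ and then eliminate it from the formula. Differentiating the adjoint equation \eqref{ad21} with respect to $u$ in direction $h$, the function $q_h:=q'(u)h$ solves
\begin{equation*}
\left\{
\begin{array}{lllll}
\dis -(q_h)_t-(a(x)(q_h)_x)_x&=&u q_h\chi_\omega + h q\chi_\omega &\mbox{in} & Q,\\
\dis a(x)(q_h)_x(\cdot,x)|_{x=\pm 1}&=&0 &\mbox{in}& (0,T),\\
q_h(T,\cdot)&=&y_h(T)&\mbox{in}& \Omega,
\end{array}
\right.
\end{equation*}
the terminal condition coming from $q(T)=y(T;u)-y^d$ and the chain rule. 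Now I would test the linearized equation \eqref{diff1} for $y_w$ against $q_h$ and the equation for $q_h$ against $y_w$, integrate over $Q$, and subtract; the boundary terms at $x=\pm1$ vanish by the flux conditions and the degeneracy, while the time integration by parts produces the terminal contribution $\int_\Omega y_w(T)y_h(T)\,\dx$. The second-order terms in the interior cancel, leaving
$$
\int_\Omega y_w(T)\,y_h(T)\,\dx \;=\; \int_{\omega_T} y\,q_h\,w\,\dq \;+\; \int_{\omega_T} q\,\big(h\,y_w + w\,y_h\big)\,\dq \;-\;\int_{\omega_T} q\, w\, y_h\,\dq,
$$
which, after rearranging, expresses $\int_{\omega_T} y\,q_h\,w\,\dq$ in terms of quantities not involving $q_h$: precisely $\int_\Omega y_w(T)y_h(T)\,\dx + \int_{\omega_T} q\, w\, y_h\,\dq - \int_{\omega_T} q\,(h y_w + w y_h)\,\dq = \int_\Omega y_w(T)y_h(T)\,\dx - \int_{\omega_T} q\, h\, y_w\,\dq$. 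Substituting back into the differentiated first-order formula collapses the expression to
$$
\mathcal{J}''(u)[w,h]=\int_{\omega_T}\big[h\,G'(u)w\big]q\,\dq+\int_\Omega (G'(u)w)(T)(G'(u)h)(T)\,\dx+\alpha\int_{\omega_T}hw\,\dq,
$$
and symmetrizing in $w,h$ (or equivalently redoing the computation using \eqref{diff3} for $z=G''(u)[w,h]$ tested against $q$) yields exactly \eqref{diff6}. Alternatively, and perhaps more cleanly, one can avoid $q'(u)h$ altogether: differentiate $\mathcal{J}(v)=\tfrac12\|G(v)(T)-y^d\|^2_{L^2(\Omega)}+\tfrac\alpha2\|v\|^2_{L^2(\omega_T)}$ twice directly, getting $\mathcal{J}''(u)[w,h]=\int_\Omega\big[(G''(u)[w,h])(T)(y(T)-y^d)+y_w(T)y_h(T)\big]\dx+\alpha\int_{\omega_T}hw\,\dq$, then test \eqref{diff3} against the adjoint $q$ to convert the first term into $\int_{\omega_T}(h y_w+w y_h)q\,\dq$.

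The main obstacle is justifying the integration-by-parts / testing steps at the level of regularity available here: the linearized and second-order states live only in $W_a(0,T)$, and the degenerate weight $a$ vanishes at $x=\pm1$, so one must check that products like $y_w q_h$ are integrable and that the boundary contributions $a(x)(\cdot)_x(\cdot)\big|_{x=\pm1}$ genuinely vanish — this is exactly where the weighted space $H^1_a(\Omega)$ and the Neumann-type flux conditions do the work, and it is handled in the same spirit as the integration-by-parts identity \eqref{equal1} in the proof of Proposition \ref{diff4}. The rest — the cancellation of second-order interior terms and the bookkeeping of signs — is routine. I would also remark that continuity of $\mathcal{J}''$ follows from the Lipschitz estimates of Propositions \ref{prop2}, \ref{prop3} and Lemma \ref{cont1}, so that $\mathcal{J}\in C^2$.
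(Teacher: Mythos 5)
Your second (``cleaner'') route is correct and is essentially the paper's intended argument: the paper's own proof is only the one-line remark that \eqref{diff6} follows from Proposition \ref{diff2} and ``straightforward computations'', namely differentiating $J$ twice by the chain rule to get
\begin{equation*}
\mathcal{J}''(u)[w,h]=\int_{\Om}\Big[(G''(u)[w,h])(T)\,\big(y(T)-y^d\big)+(G'(u)w)(T)(G'(u)h)(T)\Big]\dx+\alpha\int_{\omega_T}hw\,\dq,
\end{equation*}
and then testing \eqref{diff3} against the adjoint $q$ (using $z(0)=0$, $q(T)=y(T)-y^d$, the vanishing of the degenerate flux terms, and the cancellation of the two $uzq\chi_{\omega}$ contributions) to convert the first integral into $\int_{\omega_T}(hG'(u)w+wG'(u)h)q\,\dq$. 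On that route your write-up is fine, and your closing remarks on why the testing is legitimate in $W_a(0,T)$ and on the $\mathcal{C}^2$ regularity are appropriate.

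Your first route, however, contains a sign error which the word ``symmetrizing'' does not repair. Carrying out the duality between the forward equation for $y_w=G'(u)w$ (with $y_w(0)=0$) and the backward equation for $q_h=q'(u)h$ (with $q_h(T)=y_h(T)$) gives
\begin{equation*}
\int_{\Om}y_w(T)\,y_h(T)\,\dx=\int_{\omega_T}w\,y\,q_h\,\dq-\int_{\omega_T}h\,q\,y_w\,\dq,
\end{equation*}
that is, $\int_{\omega_T}y\,q_h\,w\,\dq=\int_{\Om}y_w(T)y_h(T)\,\dx+\int_{\omega_T}h\,q\,y_w\,\dq$, with a \emph{plus} sign on the last term, not the minus sign you obtained. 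With the correct sign, substituting into $\mathcal{J}''(u)[w,h]=\alpha\int_{\omega_T}hw\,\dq+\int_{\omega_T}\big(y_h q+y\,q_h\big)w\,\dq$ yields \eqref{diff6} directly and symmetrically. With your sign, the ``collapsed'' expression $\int_{\omega_T}h\,(G'(u)w)\,q\,\dq+\dots$ is not symmetric in $(w,h)$ --- already a red flag, since a second Fr\'echet derivative must be symmetric --- and replacing it by its symmetric average would produce $\tfrac12\int_{\omega_T}(hy_w+wy_h)q\,\dq$, off by a factor of two from the first term of \eqref{diff6}. Either fix the sign in the duality identity or simply delete the first route and keep the second, which avoids introducing $q'(u)h$ altogether.
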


\begin{proof}
	The result follows from Lemma \ref{lemmeG}, Proposition \ref{diff2}, straightforward computations and Lebesgue dominated convergence theorem.
\end{proof}
We have the following important result.
\begin{lemma}\label{diff}
	Let $G:L^\infty(\omega_T)\to W_a(0,T)\cap L^\infty(Q), u\mapsto y$ be the control-to-state operator,  where $y$ is solution to \eqref{model}. Then for any $u\in L^\infty(\omega_T)$, the linear mapping $v\mapsto G'(u)v$ can be extended to a linear continuous mapping from $L^2(\omega_T)\to W_a(0,T)$.
	In addition, the following estimate holds
	\begin{equation}\label{estimation11}
	\|G'(u)w\|_{\C([0,T];L^2(\Omega))}+	\|G'(u)w\|_{L^2((0,T);H^1_a(\Omega))}\leq 2e^{2(\|u\|_{\infty}+1)T}\|w\|_{L^2(\omega_T)},
	\end{equation}	
	for all $v\in L^2(\omega_T).$
\end{lemma}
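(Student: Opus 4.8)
The plan is to prove Lemma~\ref{diff} by combining the explicit characterization of $G'(u)w=\rho$ from Proposition~\ref{diff0} with the a~priori estimate of Theorem~\ref{theoremexistence0} (in the form of Remark~\ref{existencef'}), and then to use the resulting bound to define the extension to $L^2(\omega_T)$ by density. First I would recall that, for $w\in L^\infty(\omega_T)$, $\rho=G'(u)w$ is the unique weak solution in $W_a(0,T)$ of \eqref{diff1}, i.e.
\begin{equation*}
\left\{
\begin{array}{lllll}
\dis \rho_t-(a(x)\rho_x)_x &=&(u\rho+wy)\chi_{\omega } &\hbox{in} & Q,\\
\dis a(x)\rho_x(\cdot,x)|_{x=\pm 1}&=&0 &\mbox{in}& (0,T),\\
\rho(0,\cdot)&=&0 &\hbox{in}& \Omega,
\end{array}
\right.
\end{equation*}
where $y=G(u)$. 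This fits the framework of \eqref{model1} with control $u$, initial datum $p^0=0$, and source $f=wy\chi_\omega$. Since $y\in L^\infty(Q)$ by Corollary~\ref{theominmax}, we have $f\in L^2(Q)\hookrightarrow L^2((0,T);(H^1_a(\Omega))')$, so Theorem~\ref{theoremexistence0} together with Remark~\ref{existencef'} applies and gives
\begin{equation*}
\|\rho\|_{\C([0,T];L^2(\Omega))}+\|\rho\|_{L^2((0,T);H^1_a(\Omega))}\leq 2e^{(\|u\|_{\infty}+1)T}\|wy\chi_\omega\|_{L^2(Q)}\leq 2e^{(\|u\|_{\infty}+1)T}\|y\|_{L^\infty(Q)}\|w\|_{L^2(\omega_T)}.
\end{equation*}

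Next I would bound $\|y\|_{L^\infty(Q)}$. Here one cannot simply invoke \eqref{minmaxy}, because that estimate requires $y^0\in L^\infty(\Omega)$ and produces a factor $\|y^0\|_{L^\infty(\Omega)}$, whereas the statement of Lemma~\ref{diff} has the clean constant $2e^{2(\|u\|_\infty+1)T}$ with no $y^0$-dependence. The cleanest route consistent with the stated inequality is to observe that the claimed estimate \eqref{estimation11} is the one that should hold under the blanket hypothesis of the paper that $y^0\in L^\infty(\Omega)$ with, implicitly, $\|y^0\|_{L^\infty(\Omega)}\le 1$ absorbed, or — more carefully — to rederive the bound on $\rho$ directly by the energy method rather than through $\|y\|_{L^\infty(Q)}$: multiply the $\rho$-equation by $e^{-rt}\rho$ with $r=\|u\|_\infty+1$, integrate over $\Omega$, use $\int_\omega u\rho^2\le\|u\|_\infty\|\rho\|_{L^2(\Omega)}^2$ and Young's inequality on $\int_\omega wy\rho$, and close the Gronwall argument, which yields exactly a factor $2e^{(\|u\|_\infty+1)T}$ times $\|wy\|_{L^2(\omega_T)}$; combining with the $L^2$-in-space-time control of $y$ via $\|y\|_{L^2((0,T);H^1_a(\Omega))}\le 2e^{(\|u\|_\infty+1)T}\|y^0\|_{L^2(\Omega)}$ from \eqref{estimation1} and treating $\|y^0\|$-type constants as part of the normalization produces \eqref{estimation11}. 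I would present the energy estimate explicitly since it is the heart of the bound.

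Finally, having established the inequality \eqref{estimation11} for all $w\in L^\infty(\omega_T)$, I would extend $G'(u)$ to $L^2(\omega_T)$: the map $w\mapsto G'(u)w$ is linear on the dense subspace $L^\infty(\omega_T)\subset L^2(\omega_T)$ and, by \eqref{estimation11}, bounded from $(L^\infty(\omega_T),\|\cdot\|_{L^2})$ into the Banach space $\C([0,T];L^2(\Omega))\cap L^2((0,T);H^1_a(\Omega))$ (hence into $W_a(0,T)$, using that $\rho_t$ is controlled in $L^2((0,T);(H^1_a(\Omega))')$ by the equation itself). By the bounded-linear-extension theorem it extends uniquely to a continuous linear map on all of $L^2(\omega_T)$ satisfying the same estimate, and for $w\in L^2(\omega_T)$ the extended $G'(u)w$ is still the weak solution of \eqref{diff1} (the weak formulation passes to the limit since each term is continuous in $w$ for the $L^2$-topology). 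I expect the main obstacle to be precisely the bookkeeping of constants in the previous paragraph — namely matching the energy estimate to the exact prefactor $2e^{2(\|u\|_\infty+1)T}$ claimed in \eqref{estimation11}, which forces one to track how the $\|y\|$-estimate \eqref{estimation1} feeds into the source term $wy$ and to be careful that no additional $\|y^0\|$-factor survives; everything else is a routine application of earlier results.
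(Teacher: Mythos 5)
Your first route is exactly the paper's proof: write $G'(u)w=\rho$ as the solution of \eqref{diff1} with $p^0=0$ and $f=wy\chi_\omega\in L^2(Q)$, apply \eqref{estimation0} (via Remark \ref{existencef'}) to get the factor $2e^{(\|u\|_\infty+1)T}\|y\|_{L^\infty(Q)}\|w\|_{L^2(\omega_T)}$, then bound $\|y\|_{L^\infty(Q)}$ by Corollary \ref{theominmax}, and conclude the extension to $L^2(\omega_T)$ by density and boundedness. Your hesitation about the constant is well founded, and in fact the paper itself commits the step you distrust: combining \eqref{estimation0} with \eqref{minmaxy} yields $2e^{2(\|u\|_\infty+1)T}\|y^0\|_{L^\infty(\Omega)}\|w\|_{L^2(\omega_T)}$, and the paper simply writes the final line without the $\|y^0\|_{L^\infty(\Omega)}$ factor, so \eqref{estimation11} as stated is only correct up to that multiplicative constant (or under an implicit normalization $\|y^0\|_{L^\infty(\Omega)}\le 1$). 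Be aware, though, that your proposed workaround via a direct energy estimate on the $\rho$-equation cannot remove this dependence either: the source term is $wy$, so some norm of $y$ (hence of $y^0$) necessarily enters the bound; the honest fix is to keep the factor $\|y^0\|_{L^\infty(\Omega)}$ in \eqref{estimation11}, which changes nothing in the subsequent uses of the lemma beyond the value of the constant. The density/extension argument in your last paragraph is correct and matches the paper's (more tersely stated) conclusion.
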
	
\begin{proof} For the extension, it is sufficient to prove that for any $v\in L^2(\omega_T)$, the problem \eqref{diff1} has a unique solution $\rho\in W_a(0,T)$. This follows directly from Theorem \ref{theoremexistence}, with $f=wy\chi_{\omega}\in L^2(Q)\hookrightarrow L^2((0,T);(H^1_a(\Omega))')$. Note that $f\in L^2(Q)$ because $y\in L^\infty(Q)$. For the estimate, we take $y^0=0$ and $f=wy\chi_{\omega }$ in \eqref{estimation0}, to obtain 
	$$\|G'(u)w\|_{\C([0,T];L^2(\Omega))}+	\|G'(u)w\|_{L^2((0,T);H^1_a(\Omega))}\leq 2e^{(\|u\|_{\infty}+1)T}\|w\|_{L^2(\omega_T)}\|y\|_{L^\infty(Q)}, $$
	which using the estimate \eqref{minmax} gives
	$$\|G'(u)w\|_{\C([0,T];L^2(\Omega))}+	\|G'(u)w\|_{L^2((0,T);H^1_a(\Omega))}\leq 2e^{2(\|u\|_{\infty}+1)T}\|w\|_{L^2(\omega_T)}.$$ 
	
\end{proof}

\begin{lemma}\label{cont2}
	Let $u\in L^\infty(\omega_T)$, then, the bilinear mapping $(w,h)\mapsto \mathcal{J}''(u)[w,h]$ can be extended to a bilinear continuous mapping on $\mathcal{J}''(u):L^2(\omega_T)\times L^2(\omega_T)\to \R$ given by \eqref{diff6}.
\end{lemma}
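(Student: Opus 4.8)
The plan is to show the bilinear form $\mathcal{J}''(u)[\cdot,\cdot]$ defined by \eqref{diff6} is continuous in each argument with respect to the $L^2(\omega_T)$-topology, so that by density of $L^\infty(\omega_T)$ in $L^2(\omega_T)$ it extends uniquely to a bounded bilinear form on $L^2(\omega_T)\times L^2(\omega_T)$. The key point is that every term on the right-hand side of \eqref{diff6} can already be estimated by $L^2$-norms of $w$ and $h$, using the estimates established earlier in the paper.

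First I would recall the structure of \eqref{diff6}: it is the sum of three terms, namely $\int_{\omega_T}[hG'(u)w+wG'(u)h]q\,\dq$, $\int_{\Om}(G'(u)w)(T)(G'(u)h)(T)\,\dx$, and $\alpha\int_{\omega_T}hw\,\dq$. For the third term, Cauchy--Schwarz immediately gives $|\alpha\int_{\omega_T}hw\,\dq|\leq \alpha\|w\|_{L^2(\omega_T)}\|h\|_{L^2(\omega_T)}$. For the first term, I would apply Cauchy--Schwarz in $L^2(\omega_T)$ to bound $|\int_{\omega_T}hG'(u)w\,q\,\dq|\leq \|h\|_{L^2(\omega_T)}\|q\|_{L^\infty(Q)}\|G'(u)w\|_{L^2(\omega_T)}$, and then invoke Remark~\ref{contadj} (estimate \eqref{qinfini}) to control $\|q\|_{L^\infty(Q)}$ by a constant depending only on $\|u\|_\infty$, $T$, $\|y^0\|_{L^\infty(\Omega)}$, $\|y^d\|_{L^\infty(\Omega)}$, and Lemma~\ref{diff} (estimate \eqref{estimation11}) to control $\|G'(u)w\|_{L^2(\omega_T)}\leq \|G'(u)w\|_{L^2((0,T);H^1_a(\Omega))}\leq 2e^{2(\|u\|_{\infty}+1)T}\|w\|_{L^2(\omega_T)}$; the symmetric term with $w$ and $h$ swapped is handled identically. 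For the second term, I would use Lemma~\ref{diff} once more: since $G'(u)w, G'(u)h\in \C([0,T];L^2(\Omega))$ with $\|(G'(u)w)(T)\|_{L^2(\Omega)}\leq \|G'(u)w\|_{\C([0,T];L^2(\Omega))}\leq 2e^{2(\|u\|_\infty+1)T}\|w\|_{L^2(\omega_T)}$ and likewise for $h$, Cauchy--Schwarz in $L^2(\Omega)$ yields $|\int_{\Om}(G'(u)w)(T)(G'(u)h)(T)\,\dx|\leq 4e^{4(\|u\|_\infty+1)T}\|w\|_{L^2(\omega_T)}\|h\|_{L^2(\omega_T)}$.

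Collecting these three bounds gives a constant $C=C(\alpha,\|u\|_\infty,T,\|y^0\|_{L^\infty(\Omega)},\|y^d\|_{L^\infty(\Omega)})$, independent of $w$ and $h$, such that $|\mathcal{J}''(u)[w,h]|\leq C\|w\|_{L^2(\omega_T)}\|h\|_{L^2(\omega_T)}$ for all $w,h\in L^\infty(\omega_T)$. Since $L^\infty(\omega_T)$ is dense in $L^2(\omega_T)$ and the bilinear form is separately continuous in the $L^2$-norm, it extends by continuity (uniquely) to a bounded bilinear form $\mathcal{J}''(u):L^2(\omega_T)\times L^2(\omega_T)\to\R$; moreover the extension is still represented by formula \eqref{diff6}, because for $w,h\in L^2(\omega_T)$ the operators $G'(u)w$, $G'(u)h$ are defined through the extended linear map of Lemma~\ref{diff}, and all integrals on the right-hand side remain well defined and finite by the same estimates. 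I do not expect any serious obstacle here: the only mildly delicate point is checking that formula \eqref{diff6} continues to make sense for general $L^2$ arguments, which amounts to noting that $G'(u)w\in L^2((0,T);H^1_a(\Omega))\hookrightarrow L^2(Q)$ and $q\in L^\infty(Q)$, so the products appearing are integrable, and that the trace at $t=T$ is controlled through the continuous embedding $W_a(0,T)\subset \C([0,T];L^2(\Omega))$ of \eqref{contWTA}.
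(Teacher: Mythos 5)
Your proposal is correct and follows essentially the same route as the paper: the same three-term decomposition of \eqref{diff6}, Cauchy--Schwarz on each term, the $L^\infty$ bound on $q$ from Remark~\ref{contadj}, and the estimate \eqref{estimation11} from Lemma~\ref{diff} to control $G'(u)w$ and its trace at $t=T$, yielding $|\mathcal{J}''(u)[w,h]|\leq C\|w\|_{L^2(\omega_T)}\|h\|_{L^2(\omega_T)}$. Your added remarks on the density extension and on why formula \eqref{diff6} still makes sense for general $L^2$ arguments are slightly more explicit than the paper's proof but do not change the argument.
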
	
\begin{proof} $u\in L^\infty(\omega_T)$ and $w,h\in L^2(\omega_T)$. From \eqref{diff6}, we have 
	\begin{equation*}
\mathcal{J}''(u)[w,h]=\int_{\omega_T}[hG'(u)w+wG'(u)h]q\, \dq+\int_{\Om}(G'(u)w)(T)(G'(u)h)(T)\, \dx+\alpha\int_{\omega_T}hw\, \dq,
	\end{equation*}
	where $G'(u)w$ is solution to \eqref{diff1}.
Using Cauchy Schwarz's inequality, \eqref{estimation2} and \eqref{estimation11}, we obtain

\begin{equation*}
\begin{array}{lll}
	\dis |\mathcal{J}''(u)[w,h]|&\leq& \|q\|_{L^\infty(Q)}\left[\|h\|_{L^2(\omega_T)}\|G'(u)w\|_{L^2(Q)}+\|w\|_{L^2(\omega_T)}\|G'(u)h\|_{L^2(Q)}\right]\\
	&+& \|(G'(u)w)(T)\|_{L^2(\Omega)}\|(G'(u)h)(T)\|_{L^2(\Omega)}+\alpha \|w\|_{L^2(\omega_T)}\|h\|_{L^2(\omega_T)}\\
	&\leq& C\|w\|_{L^2(\omega_T)}\|h\|_{L^2(\omega_T)}.
\end{array}
\end{equation*}
Therefore, there exists a constant $C=C(\alpha,\|u\|_{\infty},\|y^0\|_{L^\infty(\Omega) },\|y^d\|_{L^\infty(\Omega)},T)>0$ independent of $v$ and $h$ such that
	\begin{equation*}
	\begin{array}{lll}
	\dis |\mathcal{J}''(u)[w,h]|\leq C\|w\|_{L^2(\omega_T)}\|h\|_{L^2(\omega_T)}.
	\end{array}
	\end{equation*}
	Hence, the 	mapping $(w,h)\mapsto\mathcal{J}''(u)[w,h]$ is a bilinear continuous mapping on $L^2(\omega_T)\times L^2(\omega_T)$.
\end{proof}

	Before stating second order necessary and sufficient conditions, we first introduce some preliminary concepts retrieved from \cite{fredi2010}. 
	
	\begin{definition}
		$ $
		\begin{enumerate}[-]
			\item The set of strongly active constraints is the set $A_\tau(u)$ defined by
			\begin{equation*}
			A_\tau(u):=\{(t,x)\in \omega_T : |u(t,x)+y(u)q(t,x)|>\tau\}.
			\end{equation*}
			\item We introduce the following
			for a.e $(t,x)\in \omega_T$,
			\begin{equation}\label{eq1}
			v(t,x)\left\{
			\begin{array}{lllll}
			\dis  \geq 0 &if&  u(t,x)=m,\\
			\dis \leq 0&if& u(t,x)=M,\\
			0&if&(t,x)\in A_\tau(u).
			\end{array}
			\right.
			\end{equation}
			The $\tau$-critical (see e.g \cite{casas2015a,fredi2010}) associated to a control $u$ is defined by 
			\begin{equation}\label{ccone}
			C_\tau (u)= \{v\in L^2(\omega_T) : v \;\text{fulfills}\; \eqref{eq1}\}.
			\end{equation}
		\end{enumerate}
	\end{definition}
	In the rest of the paper, we will adopt the following notation $\mathcal{J}''(u)v^2:=\mathcal{J}''(u)[v,v]$.
	
	\begin{proposition}[Second order necessary optimality conditions]
		Let $u\in \mathcal{U}$ be a $L^\infty$-local solution of problem \eqref{optimal}. Then $\mathcal{J}''(u)v^2\geq 0$ for all $v\in C_0 (u)$.
	\end{proposition}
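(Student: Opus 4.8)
The plan is to exploit the fact, noted in the remark just before the statement, that $u$ is also an $L^2$-local solution of the minimization problem, and that $\mathcal{J}$ is twice continuously Fr\'echet differentiable with $\mathcal{J}''(u)$ extending to a continuous bilinear form on $L^2(\omega_T)\times L^2(\omega_T)$ (Lemma \ref{cont2}). The argument is the standard one for second-order necessary conditions at a local minimum subject to the convex constraint set $\mathcal{U}$: pick a critical direction, build an admissible feasible arc, Taylor-expand $\mathcal{J}$ along that arc to second order, and pass to the limit.

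First I would fix $v\in C_0(u)$, the $\tau$-critical cone with $\tau=0$. The key point is that for such $v$ one can produce a sequence of admissible controls $u_k\in\mathcal{U}$ with $u_k = u + t_k v + o(t_k)$ in $L^\infty(\omega_T)$ (equivalently in $L^2(\omega_T)$) for some $t_k\downarrow 0$: on the set where $u(t,x)=m$ one has $v\ge 0$, on the set where $u(t,x)=M$ one has $v\le 0$, so for small $t_k$ the perturbation $u+t_k v$ already lies in $[m,M]$ except possibly on a small set where one truncates, the truncation contributing only a term of order $o(t_k)$ in $L^2$. Actually, since here $v\in C_0(u)$ means $v=0$ on the strongly active set $A_0(u)$ and has the correct sign on $\{u=m\}$ and $\{u=M\}$, one checks directly that $u+t v\in\mathcal{U}$ for all sufficiently small $t>0$ when $v\in L^\infty(\omega_T)$; no truncation is even needed, and one can take $u_k = u + t_k v$ with any $t_k\downarrow 0$.

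Next I would write the second-order Taylor expansion of the reduced functional along this arc. Since $\mathcal{J}$ is of class $\mathcal C^2$,
\begin{equation*}
\mathcal{J}(u+t v) = \mathcal{J}(u) + t\,\mathcal{J}'(u)v + \frac{t^2}{2}\,\mathcal{J}''(u)v^2 + o(t^2),\qquad t\to 0^+ .
\end{equation*}
Because $v\in C_0(u)$, the first-order term vanishes: on $A_0(u)$ we have $v=0$, while on $\{u=m\}$ the sign condition $\alpha u + yq\ge 0$ (from Remark \ref{remark4}, noting $m$ forces $\alpha u+yq\ge 0$) together with $v\ge 0$ gives $(\alpha u+yq)v\ge 0$, and similarly on $\{u=M\}$; but the first-order necessary condition \eqref{neccoptcond} already gives $\mathcal{J}'(u)v=\int_{\omega_T}(\alpha u + yq)v\,\dq\ge 0$, and the structure of $C_0(u)$ forces this integral to be exactly $0$ (each integrand is nonnegative a.e. and its integral is $\le 0$ by applying \eqref{neccoptcond} to both $\pm v$-type competitors, or directly since on the critical cone the complementary slackness makes $(\alpha u+yq)v=0$ a.e.). Hence $\mathcal{J}'(u)v=0$. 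Using that $u$ is an $L^\infty$-local minimizer, for $t$ small $\mathcal{J}(u+tv)\ge\mathcal{J}(u)$, so $\frac{t^2}{2}\mathcal{J}''(u)v^2 + o(t^2)\ge 0$; dividing by $t^2/2$ and letting $t\to 0^+$ yields $\mathcal{J}''(u)v^2\ge 0$. Finally, since $\mathcal{J}''(u)$ extends continuously to $L^2(\omega_T)$ and $C_0(u)\subset L^\infty(\omega_T)$ is dense, in the $L^2$-sense, in its $L^2$-closure (the remark preceding the proposition), the inequality passes to all $v$ in the closed convex cone $C_0(u)$ understood in $L^2(\omega_T)$.

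The main obstacle is the verification that $\mathcal{J}'(u)v=0$ on the critical cone and, slightly more delicately, the construction of the admissible arc: one must be careful that $u+tv\in\mathcal{U}$ genuinely holds for small $t$ when $v\in L^\infty(\omega_T)$ satisfies \eqref{eq1} with $\tau=0$. Since $v$ has the right one-sided sign exactly on the faces $\{u=m\}$ and $\{u=M\}$ and $m<u(t,x)<M$ on the complement (where $v$ is merely bounded), boundedness of $v$ gives a uniform $t_0>0$ with $u+tv\in[m,M]$ for $0<t<t_0$; this is where the $L^\infty$ bound on $v$ is essential, and it is the reason the necessary condition is first proved on $C_0(u)\subset L^\infty(\omega_T)$ and only then transferred to $L^2$.
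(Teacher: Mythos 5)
Your overall strategy (feasible arc, second-order Taylor expansion, vanishing of the first-order term on the critical cone, passage to the limit using the $L^2$-continuity of $\mathcal{J}''(u)$ from Lemma \ref{cont2}) is the same as the paper's, and your verification that $\mathcal{J}'(u)v=0$ for $v\in C_0(u)$ is correct. However, there is a genuine gap in the construction of the feasible arc: the claim that $u+tv\in\mathcal{U}$ for all sufficiently small $t>0$ whenever $v\in C_0(u)\cap L^\infty(\omega_T)$ is false. The sign conditions in \eqref{eq1} constrain $v$ only on the sets where $u$ equals $m$ or $M$ exactly; on the set $\{(t,x):m<u(t,x)<M\}$ the direction $v$ is unrestricted, and $u$ may take values arbitrarily close to $m$ (or $M$) there. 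For instance, if $u(t,x)=m+|x|$ near $x=0$ and $v=-1$ on $\{0<|x|<\delta\}$ (admissible for $C_0(u)$ off the strongly active set), then $u+tv<m$ on $\{|x|<t\}$ for every $t>0$, so no uniform $t_0$ exists; boundedness of $v$ does not help, because the distance from $u(t,x)$ to the constraint bounds has no positive lower bound on $\{m<u<M\}$.

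This is precisely why the paper does not perturb along $v$ directly but along the truncated directions $v_n=\chi_n v$, where $\chi_n$ annihilates $v$ on the thin strips $\{m<u<m+1/n\}\cup\{M-1/n<u<M\}$: then $u+\theta v_n\in\mathcal{U}$ for $\theta$ small (depending on $n$), one obtains $\mathcal{J}''(u)v_n^2\geq 0$ for each fixed $n$, and finally $v_n\to v$ in $L^2(\omega_T)$ by dominated convergence, so the continuity of $\mathcal{J}''(u)$ on $L^2(\omega_T)\times L^2(\omega_T)$ yields the inequality for $v$ itself. You mention this truncation in passing and then discard it (``no truncation is even needed''); restoring it, together with the final limit $n\to\infty$, closes the gap and turns your argument into essentially the paper's proof.
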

\begin{proof}
The proof follows similar arguments as \cite{kenne2022bil,fredi2010}. Let $v\in C_0 (u)$ and $\theta\in (0,1)$. For $n\in \N$, we define the set 
$$I_n=\left\{(x,t)\in \omega_T: m+\frac{1}{n}\leq u(x,t)\leq M-\frac{1}{n}\right\}.$$
Let $v_n:=\chi_n v$, where
\begin{equation*}
\chi_n(x,t)=\left\{
\begin{array}{lllll}
\dis  1 &if&  (x,t)\in I_n \;\text{or}\; u(x,t)\in \{m,M\} \;\text{and}\; M-m\geq \dis \frac{1}{n} ,\\
0&if& u(x,t)\in \dis \left(m,m+\frac{1}{n}\right)\cup \left(M-\frac{1}{n},M\right).
\end{array}
\right.
\end{equation*}
Then, $\chi_n(x,t)=0$ also if $u(x,t)\in \{m,M\}$ and $\dis M-m< \dis \frac{1}{n}$, and we have $\bar{u}=u+\theta v_n\in \mathcal{U}$ for $\theta\in (0,1)$. Moreover using that $u$ is a locally optimal control, we deduce that 

\begin{eqnarray*}
	0\leq \frac{\mathcal{J}(\bar{u})-\mathcal{J}(u)}{\theta}=\mathcal{J}'(u)v_n+\frac{1}{2}\theta \mathcal{J}''(u)v_n^2+\theta^{-1}r^2(u,\theta v_n),
\end{eqnarray*}
where $r^2(u,\theta v_n)$ represents the second-order remainder. Since $v\in C_0(u)$, then $v_n\in C_0(u)$ and then $\dis \mathcal{J}'(u)v_n=\int_{\omega_T}(\alpha u-\rho q)v_n\, \dq=0$. Therefore,
by dividing the both inequalities of the previous identity by $\theta$, we obtain

\begin{eqnarray*}
	0\leq \frac{1}{2}\mathcal{J}''(u)v_n^2+\theta^{-2}r^2(u,\theta v_n).
\end{eqnarray*}

Taking the limit as $\theta\to 0$ in the last inequality yields 
\begin{equation}\label{1}
\mathcal{J}''(u)v_n^2\geq 0.
\end{equation} 
It remains to prove that as $n\to +\infty$, $v_n\to v$ in $L^2(\omega_T)$. First we note that for a.e $(t,x)\in \omega_T$, $v_n(t,x)\to v(t,x)$ pointwise almost everywhere as $n\to +\infty$. In addition $|v_n(t,x)|^2\leq |v(t,x)|^2$ pointwise everywhere for all $n\in \N$. Then, using the Lebesgue's dominated convergence theorem, we deduce that as $n\to +\infty$, $v_n\to v$ in $L^2(\omega_T)$. Hence, by taking the limit as $n\to +\infty$ in \eqref{1} and using the continuity of $v\to \mathcal{J}''(u)v^2$ in $L^2(\omega_T)$, we obtain $\mathcal{J}''(u)v^2\geq 0$.
\end{proof}
	
	\begin{theorem}\label{hypothesis}
		Let $u\in \mathcal{U}$ be a control satisfying the first order optimality conditions \eqref{ineq}. Then the following hold:
		\begin{enumerate}[(1)]
			\item The functional $\mathcal{J}:L^\infty(\omega_T)\to \R$ is of class $\mathcal{C}^2$. Furthermore, for every $u\in \mathcal{U}$, there exist continuous extensions
			\begin{equation}\label{h1}
			\mathcal{J}'(u)\in \mathcal{L}(L^2(\omega_T),\R)\;\;\;\;\text{and}\;\;\; \mathcal{J}''(u)\in \mathcal{B}(L^2(\omega_T),\R).
			\end{equation}
			\item For any sequence $\left\{(u_k,v_k)\right\}_{k=1}^{\infty}\subset \mathcal{U}\times L^2(\omega_T)$ with $\|u_k-u\|_{L^2(\omega_T)}\to 0$ and $v_k \rightharpoonup v$ weakly in $L^2(\omega_T)$,
			\begin{equation}\label{h2}
			\dis	\mathcal{J}'(u)v=\lim_{k\to+ \infty} \mathcal{J}'(u_k)v_k,
			\end{equation}
			
			\begin{equation}\label{h3}
			\dis	\mathcal{J}''(u)v^2\leq \liminf_{k\to +\infty} \mathcal{J}''(u_k)v_k^2,
			\end{equation}
			
			\begin{equation}\label{h4}
			\dis	\text{If}\; v=0, \;\; \text{then} \;\;\;\Lambda \liminf_{k\to+ \infty}\|v_k\|^2_{L^2(\omega_T)}\leq\liminf_{k\to +\infty} \mathcal{J}''(u_k)v_k^2,
			\end{equation}
			for some $\Lambda>0$.
		\end{enumerate}
	\end{theorem}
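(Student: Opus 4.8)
The plan is to verify the three structural properties in turn, drawing on the differentiability and extension results already established. Item (1) requires essentially no new argument: that $\mathcal{J}$ is of class $\mathcal{C}^2$ on $L^\infty(\omega_T)$ is the conjunction of Proposition~\ref{diff4} and Proposition~\ref{diff44} (which themselves rely on the $\mathcal{C}^\infty$-regularity of $G$ from Proposition~\ref{lemmeG}), while the continuous extensions in \eqref{h1} are exactly the content of Lemma~\ref{cont1} for $\mathcal{J}'(u)$ and Lemma~\ref{cont2} for $\mathcal{J}''(u)$; the first-order optimality of $u$ is not used here.

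For item (2) I would fix $(u_k,v_k)\subset\mathcal{U}\times L^2(\omega_T)$ with $u_k\to u$ strongly and $v_k\rightharpoonup v$ weakly in $L^2(\omega_T)$, and observe that $\|u_k\|_{\infty},\|u\|_{\infty}\le\beta:=\max\{|m|,|M|\}$, so all constants below are uniform in $k$. Writing $y_k:=G(u_k)$, $q_k:=q(u_k)$, $y:=G(u)$, $q:=q(u)$, Propositions~\ref{prop2} and~\ref{prop3} give $y_k\to y$ and $q_k\to q$ strongly in $\mathcal{C}([0,T];L^2(\Omega))\cap L^2((0,T);H^1_a(\Omega))$; since \eqref{minmaxy} and \eqref{qinfini} bound $y_k$ and $q_k$ uniformly in $L^\infty(Q)$, a dominated-convergence argument upgrades these to strong convergence in $L^p(Q)$ for every $p<\infty$. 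Consequently $\alpha u_k+y_kq_k\to\alpha u+yq$ strongly in $L^2(\omega_T)$ (decompose $y_kq_k-yq=(y_k-y)q_k+y(q_k-q)$), and pairing this strongly convergent coefficient with the weakly convergent $v_k$ in $\mathcal{J}'(u_k)v_k=\int_{\omega_T}(\alpha u_k+y_kq_k)v_k\,\dq$ yields \eqref{h2}.

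The heart of item (2) is the behaviour of $\rho_k:=G'(u_k)v_k\in W_a(0,T)$, the solution of \eqref{diff1} with state $y_k$ and direction $v_k$. Lemma~\ref{diff} bounds $\rho_k$ uniformly in $W_a(0,T)$, and since $\partial_t\rho_k=(a\rho_{k,x})_x+(u_k\rho_k+v_ky_k)\chi_\omega$ is bounded in $L^2((0,T);(H^1_a(\Omega))')$, the compactness of the embedding $H^1_a(\Omega)\hookrightarrow L^2(\Omega)$ (Lemma~\ref{embed}) together with an Aubin--Lions argument gives, along a subsequence, $\rho_k\to\rho$ strongly in $L^2(Q)$, $\rho_k\rightharpoonup\rho$ in $L^2((0,T);H^1_a(\Omega))$, and $\rho_k(T)\rightharpoonup\rho(T)$ in $L^2(\Omega)$ (using also \eqref{contWTA}). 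Passing to the limit in the weak formulation of \eqref{diff1} — with $u_k\rho_k\to u\rho$ strongly in $L^2(Q)$, $\sqrt{a}\,\rho_{k,x}\rightharpoonup\sqrt{a}\,\rho_x$ in $L^2(Q)$, and $v_ky_k\rightharpoonup vy$ weakly in $L^2(Q)$ (a consequence of the uniform $L^\infty(Q)$-bound on $y_k$ and its a.e.\ convergence) — identifies $\rho$ as the unique solution of \eqref{diff1} for the direction $v$, i.e.\ $\rho=G'(u)v$ in the extended sense of Lemma~\ref{diff}; uniqueness of the limit then promotes every convergence above to the full sequence. The same decomposition, combined with the $L^\infty$-bound \eqref{qinfini} and dominated convergence, gives $\rho_kq_k\to\rho q$ strongly in $L^2(Q)$. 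Hence, starting from
\begin{equation*}
\mathcal{J}''(u_k)v_k^2=2\int_{\omega_T}v_k\,(\rho_kq_k)\,\dq+\|\rho_k(T)\|^2_{L^2(\Omega)}+\alpha\|v_k\|^2_{L^2(\omega_T)},
\end{equation*}
the first term converges to $2\int_{\omega_T}v\rho q\,\dq$, while weak lower semicontinuity of the $L^2$-norms gives $\liminf_k\|\rho_k(T)\|^2_{L^2(\Omega)}\ge\|\rho(T)\|^2_{L^2(\Omega)}$ and $\liminf_k\|v_k\|^2_{L^2(\omega_T)}\ge\|v\|^2_{L^2(\omega_T)}$; summing these three facts yields \eqref{h3}. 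For \eqref{h4}, when $v=0$ one has $\rho=G'(u)0=0$, so the first term tends to $0$ and the $\rho_k(T)$-term is nonnegative, whence $\liminf_k\mathcal{J}''(u_k)v_k^2\ge\alpha\liminf_k\|v_k\|^2_{L^2(\omega_T)}$, i.e.\ \eqref{h4} holds with $\Lambda=\alpha$.

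The step I expect to be the main obstacle is the identification of the weak limit of $\rho_k=G'(u_k)v_k$, namely passing to the limit in the bilinear source term $v_ky_k\chi_\omega$ and afterwards in the product $\int_{\omega_T}v_k\rho_kq_k$, where only weak $L^2(\omega_T)$-convergence of $v_k$ is available. The mechanism that makes this work is that the maximum-principle estimates \eqref{minmaxy} and \eqref{qinfini} make $y_k$ and $q_k$ uniformly bounded in $L^\infty(Q)$ — hence strongly convergent in every $L^p(Q)$, $p<\infty$ — which supplies exactly the extra integrability needed to absorb the weakly convergent factor and to upgrade the relevant products to strong $L^2(Q)$-convergence.
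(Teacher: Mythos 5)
Your proposal is correct and follows essentially the same route as the paper: item (1) from Propositions \ref{diff4}, \ref{diff44} and Lemmas \ref{cont1}, \ref{cont2}; \eqref{h2} by pairing the strongly convergent coefficient $\alpha u_k+G(u_k)q(u_k)$ with the weakly convergent $v_k$; \eqref{h3} via strong $L^2(Q)$-convergence of $(G'(u_k)v_k)q(u_k)$ plus weak lower semicontinuity; and \eqref{h4} with $\Lambda=\alpha$. The only difference is one of detail: where the paper merely cites the compact embedding of Lemma \ref{embed} and the uniform bounds, you spell out the Aubin--Lions compactness argument and the identification of the limit of $G'(u_k)v_k$ in the weak formulation, which is exactly the step the paper leaves implicit.
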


\begin{proof}  	
	Using Proposition \ref{diff4}, Lemma \ref{cont1}, Proposition \ref{diff44} and Lemma \ref{cont2}, we obtain the first point $\textit{(1)}$ of Theorem \ref{hypothesis}.\\
		Now, we prove the point $\textit{(2)}$ in three steps.\\
		Let $\left\{(u_k,v_k)\right\}_{k=1}^{\infty}$ be a sequence of $\mathcal{U}\times L^2(\omega_T)$ such that $\|u_k-u\|_{L^2(\omega_T)}\to 0$ and $v_k \rightharpoonup v$ weakly in $L^2(\omega_T)$.\\
		\noindent \textbf{Step 1.} We prove \eqref{h2}. \\
		 Using the Lipschitz continuity property of $G$ given in Proposition \ref{prop2}, we obtain that $\dis G(u_k)\to G(u)$ in $L^2((0,T);H^1_a(\Omega))$ as $k\to \infty$. We also obtain due to Proposition \ref{prop3} that  the solution of problem \eqref{ad21} satisfies $\dis q(u_k)\to q(u)$ in $L^2((0,T);H^1_a(\Omega))$ as $k\to +\infty$. Since, $G(u_k)$ and $G(u)$ are respectively solutions to \eqref{model} with $v=u_k$ and $v=u$. Therefore, using \eqref{minmaxy}, we deduce that in particular the sequence $G(u_k)$ is bounded in $L^2(Q)$ and $G(u)\in L^\infty(Q)$. Hence $G(u_k)q(u_k),G(u)q(u)\in L^2(Q)$ and 
		\begin{equation*}\label{conv}
		G(u_k)q(u_k)\to G(u)q(u) \;\;\text{in}\;\;L^2(Q)\;\;\text{as}\;\; k\to +\infty.
		\end{equation*}
	From the expression of $\mathcal{ J}'$ given in \eqref{diff5}, we deduce using the latter convergence that:
		\begin{eqnarray*}
			\dis \lim_{k\to +\infty} \mathcal{J}'(u_k)v_k=\lim_{k\to+ \infty}\int_{\omega_T}(\alpha u_k+G(u_k)q(u_k))v_k\, \dq =\int_{\omega_T}(\alpha u+G(u)q(u))v\, \dq= \mathcal{J}'(u)v.
		\end{eqnarray*}
		Thus \eqref{h2} is proved.\\
		\noindent \textbf{Step 2.} Let us show \eqref{h3}. We have
		\begin{equation}\label{d}
		\mathcal{J}''(u_k)v_k^2=2\int_{\omega_T}v_k(G'(u_k)v_k)q(u_k)\dq+\int_{\Om}|(G'(u_k)v_k)(T)|^2\, \dx+\alpha\int_{\omega_T}|v_k|^2\, \dq.
		\end{equation}
		Note that $G'(u_k)v_k$ is the unique weak solution of \eqref{diff1} with $u=u_k$ and $w=v_k$. Then, we claim that 
		\begin{equation*}
		(G'(u_k)v_k) q(u_k) \;\;\text{converges strongly to    } (G'(u)v)q(u) \;\; \text{in  } L^2(Q)\;\;\text{ as   } k\to+ \infty.
		\end{equation*}
		The latter convergence follows from the boundedness of the sequences $(G(u_k), q(u_k))$ and $v_k$ in $L^{\infty}(Q)\times L^{\infty}(Q)$ and in $L^2(\omega_T)$ respectively,  the estimation  \eqref{estimation11}, Lemma \ref{embed} and the convergence $\dis q(u_k)\to q(u)$ in $L^2((0,T);H^1_a(\Omega))$ as $k\to +\infty$. Therefore, taking the limit as $k\to +\infty$ in \eqref{d} and using the lower-semi continuity of the $L^2$-norm, we deduce that
		\begin{eqnarray*}
			\dis \lim_{k\to +\infty}  \mathcal{J}''(u_k)v_k^2
			\!\!\!&\geq&\!\!\! 2\lim_{k\to+ \infty}\int_{\omega_T}v_k(G'(u_k)v_k)q(u_k)\dq+\liminf_{k\to+ \infty}\left[\int_{\Om}|(G'(u_k)v_k)(T)|^2\, \dx+\alpha \int_{\omega_T}|v_k|^2\, \dq\right]\\
			&\geq& 2\int_{\omega_T}v(G'(u)v)q(u)\dq+\int_{\Om}|(G'(u)v)(T)|^2\, \dx+\alpha \int_{\omega_T}|v|^2\, \dq\\
			&=& \mathcal{J}''(u)[v,v].
		\end{eqnarray*}
		Hence \eqref{h3} holds.
		
		\noindent \textbf{Step 3.} Finally, we show \eqref{h4}.
		If $v=0$, then in \eqref{d}, the first and the second terms tend to $0$, except the last one. Hence,
		$$\Lambda  \liminf_{k\to+ \infty}\|v_k\|^2_{L^2(\omega_T)}\leq   \lim_{k\to +\infty}  \mathcal{J}''(u_k)v_k^2,$$
		with $\Lambda=\alpha$. 
	\end{proof}

Now, we state one of the main results of this paper.

%
%

	\begin{theorem}[Second order sufficient optimality conditions]\label{Quadratic growth}
	 Let $u\in \mathcal{U}$ be a control satisfying the first order optimality conditions \eqref{neccoptcond} and
		\begin{equation}\label{cdt1}
		\mathcal{J}''(u)v^2> 0 \quad \forall v\in C_0(u)\backslash\{0\}.
		\end{equation}
		
		Then, there are two constants $\varepsilon>0$ and $\gamma>0$ such that the quadratic growth condition
		\begin{equation}\label{cdt2}
		\mathcal{J}(v)\geq \mathcal{J}(u)+\frac{\gamma}{2}\|v-u\|^2_{L^2(\omega_T)}\quad \forall v\in \mathcal{U}\cap B^2_{\varepsilon}(u)
		\end{equation}
			holds. Hence $u$ is locally optimal in the sense of $L^2(\omega_T)$.
	\end{theorem}
	\begin{proof}
	From Theorem  \ref{hypothesis}, the assumptions of  \cite[Theorem 2.3 ]{casas2012} are fulfilled.  Hence,  \eqref{cdt2} holds. 
	\end{proof}
To provide a more detailed understanding of the challenges associated with second-order optimality conditions, we can refer to the following example, which has been extracted from \cite{casas2012}.
	\begin{example}
Let us consider the following optimization problem
\begin{equation}\label{minpb}
\begin{array}{llll}
\dis \min_{v\in L^2(0,1)}J(v)=:\int_{0}^{1}\left[\left(v(x)+\frac{\pi}{2}\right)^2+\sin(u(x))\right]\dx.
\end{array}
\end{equation}
Then, $\dis u(x)=-\frac{\pi}{2}$ is a global solution of the optimization problem \eqref{minpb}. In addition, $J$ is of class $\mathcal{C}^2$ in $L^\infty(0,1)$, $J'(u)v=0$ and $J''(u)v^2=3\|v\|^2_{L^2(0,1)}$, $\forall v\in L^2(0,1)$. However, it is easy to check that the assumptions of Theorem \ref{hypothesis} are satisfied; thus Theorem \ref{Quadratic growth} implies that $u$ is a strict local minimum in the sense of $L^2(0, 1)$.
	\end{example}

\section*{Acknowledgements}
We would like to thank the reviewers for their valuable comments and suggestions which helped us to improve significantly the paper.
\bibliographystyle{abbrv}
\bibliography{mybibfile}
	
\end{document}